\documentclass[12pt]{amsart}

\usepackage[left=1in,top=1.25in,right=1in,foot=1.25in]{geometry}
\usepackage{appendix}
\usepackage{amsmath}
\usepackage{amsfonts,amssymb}
\usepackage{sidecap}
\usepackage{graphicx}
\usepackage{subfig}
\usepackage{amsthm}
\usepackage{dsfont}
\usepackage{enumerate}
\usepackage{color}
\usepackage{graphicx}
\usepackage{comment}
\newtheorem{thm}{Theorem}[subsection]
\newtheorem{cor}{Corollary}[subsection]
\newtheorem*{thmnonum}{Theorem}

\newtheorem*{cornonum}{Corollary}
\theoremstyle{definition}

\theoremstyle{remark}

\theoremstyle{remark}
\newtheorem{rmk}{Remark}[subsection]
\theoremstyle{plain}
\newtheorem{lem}[thm]{Lemma}
\newcommand{\la}[0]{\langle}
\newcommand{\ra}[0]{\rangle}

\newcommand{\Ric}[0]{\text{Ric}}

\newcommand{\Ind}[0]{\text{Ind}}
\title{Index bounds for free boundary minimal surfaces of convex bodies}
\author{Pam Sargent}


\begin{document}
\thanks{2010 {\em Mathematics Subject Classification.} Primary 49Q05; Secondary
47A75, 37B30 \\
This work is partially supported by NSERC}

\maketitle
\begin{abstract} In this paper, we give a relationship between the eigenvalues of the Hodge Laplacian and the eigenvalues of the Jacobi operator for a free boundary minimal hypersurface of a Euclidean convex body. We then use this relationship to obtain new index bounds for such minimal hypersurfaces in terms of their topology. In particular, we show that the index of a free boundary minimal surface in a convex domain in $\mathbb{R}^3$ tends to infinity as its genus or the number of boundary components tends to infinity.
\end{abstract}

\section{Introduction}
In this paper we look at the problem of obtaining lower bounds on the index of free boundary minimal surfaces of convex bodies in terms of their topology. Index estimates for minimal surfaces are generally difficult to obtain, and there are few minimal surfaces for which the index is explicitly known. However, index bounds can help in the classification of minimal surfaces, especially when the topology is explicitly represented in the bounds, and have applications in understanding the relationships between the curvature and topology of Riemannian manifolds. Moreover, minimal surfaces whose index is known have proven to be useful in other problems; Urbano's \cite{Ur90} index characterization of the Clifford torus as being the unique minimal surface in $\mathbb{S}^3$ of index 5 was recently used by Marques and Neves \cite{Ma12} in their celebrated proof of the longstanding Willmore Conjecture. In \cite{Sav10}, Savo was able to obtain index bounds for minimal hypersurfaces in $\mathbb{S}^n$ in terms of their topology making use of the Laplacian on 1-forms. His work has inspired the approach taken in this paper.

\subsection{Free Boundary Minimal Hypersurfaces in Convex Bodies}
A submanifold $M$ of a compact Riemannian manifold $\overline{M}$ with boundary $\partial M\subset \partial \overline{M}$ is said to be a \emph{free boundary minimal submanifold} in $\overline{M}$ if it is a critical point for the volume functional among submanifolds with boundary in $\partial \overline{M}$. That is, $M$ is a free boundary minimal submanifold of $\overline{M}$ if for every admissible variation $M_t$ of $M$, $\frac{d}{dt}\text{Vol}(M_t)\big|_{t=0} =0$. 
The first variation formula for a variation $M_t$ of $M$ with variation field $V$ is given by,
\begin{equation*}
\frac{d}{dt}\text{Vol}(M_t)\big|_{t=0} = -\int_M\la{V, H}\ra dV +\int_{\partial M}\la{V, \eta}\ra dA,
\end{equation*}
where $\eta$ is the outward unit conormal vector field.
It follows that $M$ is a free boundary minimal submanifold of $\overline{M}$ if and only if $H\equiv 0$ and $\eta$ is orthogonal to $T(\partial \overline{M})$, \textit{i.e.}, $M$ meets $\partial \overline{M}$ orthogonally.

Throughout, we will focus our attention on free boundary minimal hypersurfaces $M^n$ properly immersed in convex bodies $B^{n+1}$. Here, a \emph{convex body} is a smooth, compact, connected $(n+1)$-dimensional submanifold of $\mathbb{R}^{n+1}$ for which the scalar second fundamental form of the boundary satisfies $h^{\partial B}(U,U)<0$ (with respect to the outward pointing normal vector) for all vectors $U$ tangent to $\partial B$.

We will also place some attention on the special case when $B=\mathbb{B}$, the Euclidean ball, as there are more existence results for free boundary minimal hypersurfaces of Euclidean balls. Free boundary minimal hypersurfaces of Euclidean balls have also been shown to have an alternative characterization: in \cite{FS11}, Fraser and Schoen showed that if $\Sigma^k$ is a properly immersed submanifold of the Euclidean unit ball $\mathbb{B}^{n+1}$, then $\Sigma$ is a free boundary minimal submanifold if and only if the coordinate functions of the immersion are (Steklov) eigenfunctions of the Dirichlet-to-Neumann map with (Steklov) eigenvalue 1. Furthermore, free boundary minimal surfaces in $\mathbb{B}^{n+1}$ are extremal surfaces for the Steklov eigenvalue problem.

\subsection{The Index of a Minimal Hypersurface}
Suppose that $M^n\subset B^{n+1}$ is a free boundary minimal hypersurface and that $N$ is a smooth unit normal vector field. Then, for a normal variation with variation field $uN$, the second variation formula is
\begin{equation*}
\frac{d^2}{dt^2}\text{Vol}(M_t)\big|_{t=0} = \int_M \left(\|\nabla u\|^2 - \|A\|^2u^2\right) dV + \int_{\partial M}h^{\partial B}(N,N) u^2 dA.
\end{equation*}

Let $J$ denote the \emph{Jacobi operator} (also called the \emph{stability operator}),
\begin{equation*}
J = \Delta - \|A\|^2,
\end{equation*}
and let $Q$ denote the associated symmetric bilinear form,
\begin{equation*}
\begin{split}
Q(u) &= \int_M \left[ \|\nabla u\|^2- \|A\|^2u^2\right]dV+\int_{\partial M}h^{\partial B}(N,N) u^2\, dA\\
& = -\int_M u\cdot Ju\, dV + \int_{\partial M} \left(\frac{\partial u}{\partial \eta}+h^{\partial B}(N,N)u\right)u\, dA.
\end{split}
\end{equation*}
We say that $\lambda(J)$ is an eigenvalue of $J$ with eigenfunction $u\in C^{\infty}(M)$ if 
\begin{equation*}
\begin{cases}
Ju = \lambda u \ \ \text{on } M,\\
\frac{\partial u}{\partial \eta} +h^{\partial B}(N,N) u =0 \ \ \text{on } \partial M.
\end{cases}
\end{equation*}

The (\emph{Morse}) \emph{index} of a minimal hypersurface is the maximal dimension of a subspace of $C^{\infty}(M)$ on which the second variation is negative.

A free boundary minimal hypersurface is said to be \emph{stable} if it has index 0. For free boundary minimal hypersurfaces in $B^{n+1}$, there are none which are stable. This is easy to see since if we use the variation with variation field $1\cdot N$, then we get that
\begin{equation*}
Q(1) = -\int_M \|A\|^2\, dV + \int_{\partial M}\left(0+h^{\partial B}(N,N)\right)\cdot 1 \,dA <0.
\end{equation*}
Hence, any free boundary minimal hypersurface in $B^{n+1}$ has index at least 1.

\subsection{Examples and Existence Results} For general convex bodies, little is known about the existence of free boundary minimal submanifolds.  In \cite{St84}, Struwe showed the existence of a (possibly branched) immersed free boundary minimal disk in any domain in $\mathbb{R}^3$ diffeomorphic to $\mathbb{B}^3$, and Gr\"uter and Jost \cite{GJ86} showed that there is an embedded free boundary minimal disk in any convex body in $\mathbb{R}^3$. Jost \cite{Jo89} was also able to show that any convex body in $\mathbb{R}^3$ actually contains at least three embedded free boundary minimal disks.  More recently, Maximo, Nunes and Smith \cite{MNS} showed that any convex body in $\mathbb{R}^3$ contains a minimal annulus. By the above argument, we know that any free boundary minimal hypersurface of a convex body has index at least one. Currently, there are no other existence results or index results for minimal surfaces of greater topological complexity, nor are there any existence results in higher dimensions.  

If we focus on minimal submanifolds of Euclidean balls, then more is known. 
The simplest examples of free boundary minimal submanifolds in $\mathbb{B}^{n+1}$ are the equatorial $k$-planes $D^k \subset \mathbb{B}^{n+1}$. By \cite{Ni85} and \cite{FS15b}, any simply connected free boundary minimal surface in $\mathbb{B}^n$ must be a flat equatorial disk,  and it is well known that the equatorial disk has index 1 (see p.\ 3741 in \cite{Fr07}). In fact, it is the only free boundary minimal surface of $\mathbb{B}^{3}$ to have index 1. However, there are many examples of free boundary minimal surfaces of different topological type. The critical catenoid, a minimal surface with genus 0 and 2 boundary components, is an explicit example of such a surface. In \cite{FS15}, Fraser and Schoen prove existence of free boundary minimal surfaces in $\mathbb{B}^3$ with genus 0 and $k>0$ boundary components. Using gluing techniques, in \cite{FPZ15} Folha, Pacard and Zolotareva give an independent construction of free boundary minimal surfaces in $\mathbb{B}^3$ of genus 0 with $k$ boundary components for $k$ large. They are also able to use the same techniques to construct a genus 1 free boundary minimal surface with $k$ boundary components for $k$ large. Kapouleas and M.\ Li have constructed free boundary minimal surfaces in $\mathbb{B}^3$ with any sufficiently large genus and 3 boundary components by gluing an equatorial disk to a critical catenoid, though this has not yet been published. If $M$ is not an equatorial disk, then all that is known about its index is that $\Ind(M)\geq 3$ (see Theorem 3.1 in \cite{FS15}).

In this paper, we give a relationship between the eigenvalues of the Jacobi operator and the eigenvalues of the Laplacian on 1-forms and, as a corollary, obtain new index bounds for orientable free boundary minimal hypersurfaces of convex bodies. More specifically, our first main result is:
\begin{thm}\label{ER} Let $M^n$ be an orientable free boundary minimal hypersurface of a convex body in $\mathbb{R}^{n+1}$ with Jacobi operator $J$. Then, for all positive integers $j$, one has that
\begin{equation*}
\lambda_j(J)\leq\lambda_{m(j)}(\Delta_1),
\end{equation*}
where $m(j) = {n+1\choose 2}(j-1)+1$ and $\lambda_{m(j)}(\Delta_1)$ is the $m(j)$th eigenvalue of the Laplacian eigenvalue problem with absolute boundary conditions.
\end{thm}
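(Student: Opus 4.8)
The plan is to combine the variational (min--max) characterization of the eigenvalues of $J$ with test functions manufactured from eigen-$1$-forms of $\Delta_1$. Under absolute boundary conditions, a $1$-form $\omega$ lying in the span of the first $N$ eigenforms of $\Delta_1$ satisfies the Rayleigh bound $\int_M(|d\omega|^2+|\delta\omega|^2)\,dV\le \lambda_N(\Delta_1)\int_M|\omega|^2\,dV$, and this is what will be fed into the estimate. On the $J$-side, since every $H^1$ function is an admissible competitor for $Q$ (the Robin condition $\frac{\partial u}{\partial\eta}+h^{\partial B}(N,N)u=0$ is the natural boundary condition), we have $\lambda_j(J)=\inf_{\dim W=j}\max_{0\ne u\in W}Q(u)/\|u\|_{L^2}^2$; in particular $Q(u)\ge\lambda_j(J)\|u\|_{L^2}^2$ whenever $u$ is $L^2(M)$-orthogonal to the first $j-1$ eigenfunctions of $J$.

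Next I would attach to each $1$-form $\omega$ on $M$ a family of $\binom{n+1}{2}$ test functions. Viewing $\omega^\sharp$ and the unit normal $N$ as $\mathbb{R}^{n+1}$-valued functions along $M$, form the $\Lambda^2\mathbb{R}^{n+1}$-valued function $\omega^\sharp\wedge N$ and let $f^{(s)}_\omega$, $s=1,\dots,\binom{n+1}{2}$, be its components in a fixed orthonormal basis of $\Lambda^2\mathbb{R}^{n+1}$; equivalently $f^{(s)}_\omega=\la\omega^\sharp,\Theta_s N\ra$ for the corresponding skew-symmetric endomorphisms $\Theta_s\in\mathfrak{so}(n+1)$, i.e.\ $\omega$ contracted against the tangential parts of the rotational Killing fields of $\mathbb{R}^{n+1}$. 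Since $\omega^\sharp\perp N$ and $|N|=1$, one has the pointwise identity $\sum_s(f^{(s)}_\omega)^2=|\omega^\sharp\wedge N|^2=|\omega|^2$, so $\sum_s\|f^{(s)}_\omega\|_{L^2}^2=\|\omega\|_{L^2}^2$; in particular this sum is nonzero for $\omega\ne0$.

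The technical heart is the estimate
\begin{equation*}
\sum_{s}Q\big(f^{(s)}_\omega\big)\ \le\ \int_M\big(|d\omega|^2+|\delta\omega|^2\big)\,dV
\end{equation*}
valid for every $1$-form $\omega$, with the constant on the right matching the constant $1$ in $\sum_s\|f^{(s)}_\omega\|_{L^2}^2=\|\omega\|_{L^2}^2$. To prove it I would sum the pointwise expressions for $|\nabla f^{(s)}_\omega|^2$ and $\|A\|^2(f^{(s)}_\omega)^2$ over the orthonormal basis, using $\sum_s\Theta_s\otimes\Theta_s=\mathrm{Id}_{\Lambda^2}$, the Gauss-map identity $\nabla^M_X N=-A(X)$, minimality ($H\equiv0$), and the Gauss equation, to obtain a Bochner/Weitzenb\"ock-type pointwise identity; the terms $\|A\|^2(f^{(s)}_\omega)^2$ sum to exactly $\|A\|^2|\omega|^2$, which is what forces the constants on the two sides to agree. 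Integrating over $M$ and integrating by parts converts the $\nabla\omega$-terms into $\int_M(|d\omega|^2+|\delta\omega|^2)$; the free boundary condition ($M$ meets $\partial B$ orthogonally, so that along $\partial M$ the conormal $\eta$ agrees with the outward unit normal of $\partial B$ and $N$ is tangent to $\partial B$) is used to identify the boundary integrals produced by Stokes' theorem and cancel them against the term $\int_{\partial M}h^{\partial B}(N,N)(f^{(s)}_\omega)^2\,dA$ in $Q$, while convexity enters precisely through $h^{\partial B}(N,N)<0$, which gives the remaining boundary integral a favorable sign so that it may be discarded. Closing this computation with exactly matching constants while correctly bookkeeping every boundary term is the step I expect to be the main obstacle.

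Finally, the eigenvalue comparison follows from a dimension count. Let $\omega_1,\dots,\omega_{m(j)}$ be the first $m(j)$ eigenforms of $\Delta_1$ and $\phi_1,\dots,\phi_{j-1}$ the first $j-1$ eigenfunctions of $J$. Requiring each of the $\binom{n+1}{2}$ functions $f^{(s)}_\omega$ attached to $\omega\in\mathrm{span}(\omega_1,\dots,\omega_{m(j)})$ to be $L^2(M)$-orthogonal to all of $\phi_1,\dots,\phi_{j-1}$ imposes $\binom{n+1}{2}(j-1)=m(j)-1$ linear conditions on an $m(j)$-dimensional space, so there is a nonzero such $\omega$. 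For it, the min--max bound gives $\sum_s Q(f^{(s)}_\omega)\ge\lambda_j(J)\sum_s\|f^{(s)}_\omega\|_{L^2}^2=\lambda_j(J)\|\omega\|_{L^2}^2$, while the estimate above together with the Rayleigh bound for $\mathrm{span}(\omega_1,\dots,\omega_{m(j)})$ gives $\sum_sQ(f^{(s)}_\omega)\le\int_M(|d\omega|^2+|\delta\omega|^2)\le\lambda_{m(j)}(\Delta_1)\|\omega\|_{L^2}^2$. Since $\|\omega\|_{L^2}^2>0$, dividing yields $\lambda_j(J)\le\lambda_{m(j)}(\Delta_1)$, as desired.
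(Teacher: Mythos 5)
Your strategy coincides with the paper's own proof: the test functions $f^{(s)}_\omega=\la\omega^\sharp,\Theta_sN\ra$ are exactly the functions $u=\la X_{V,W},\xi\ra$ with $X_{V,W}=\la\overline V,N\ra W-\la\overline W,N\ra V$ used in Section 4 (summing over an orthonormal basis of $\Lambda^2\mathbb{R}^{n+1}$ plays the role of the paper's averaging over $\overline{\mathcal U}\times\overline{\mathcal U}$ via Lemma \ref{IC}), and the dimension count of $\binom{n+1}{2}(j-1)=m(j)-1$ orthogonality conditions followed by min--max is identical. So the skeleton is right, and the quantity you bound by, $\int_M(|d\omega|^2+|\delta\omega|^2)\,dV$, agrees with the paper's $\int_M\la\xi,\Delta\xi\ra\,dV$ because the absolute boundary conditions kill the boundary terms in Green's formula.

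The gap is precisely the step you flag as the expected obstacle: the inequality $\sum_sQ(f^{(s)}_\omega)\le\int_M(|d\omega|^2+|\delta\omega|^2)\,dV$ is asserted with a plan but not proved, and it is the entire technical content of the theorem (Lemmas \ref{PPC}, \ref{PC1}, \ref{LapIP}, \ref{IC}, \ref{BC} and \ref{JC}). It does close with matching constants: one shows $Jf=\la X_{V,W},\Delta\xi\ra+2\bigl(\la\nabla_{S(V)}\xi,W\ra-\la\nabla_{S(W)}\xi,V\ra\bigr)$ using Codazzi, Gauss and minimality, and the second term is annihilated by the sum over $s$. Where your sketch is actually misleading is the boundary bookkeeping. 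The boundary integrals do not cancel; they combine into $4\int_{\partial M}h^{\partial B}(N,N)\|\xi\|^2\,dA$, which is then discarded using convexity. More importantly, to evaluate $\sum_s\int_{\partial M}f^{(s)}\,\partial_\eta f^{(s)}\,dA$ one must convert $\eta(\|\xi\|^2)$ into $2h^{\partial B}(N,N)\|\xi\|^2$ via the Ros-type identity $\la\nabla_\eta\xi,\xi\ra=h^{\partial B}(N,N)\|\xi\|^2$ on $\partial M$ (Lemma \ref{BC}), and this is \emph{not} a consequence of the free boundary (orthogonal intersection) condition alone, to which your sketch attributes it: its proof uses both absolute boundary conditions $i^*\iota_\eta\omega=0$ and $i^*\iota_\eta d\omega=0$ satisfied by your $\omega$. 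Without invoking those at the boundary, the term $\int_{\partial M}\la\nabla_\eta\xi,\xi\ra\,dA$ has no sign and the argument does not close.
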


Let $\beta^1_a=\dim H_a^1(M)$ be the first absolute Betti number of $M$. Our second main result is:
\begin{thm}{\upshape (Index Bound)}\label{IB} If $M$ is an orientable free boundary minimal hypersurface of a convex body in $\mathbb{R}^{n+1}$, then
{\upshape\begin{equation*}
\Ind(M) \geq \left \lfloor{\frac{\beta_a^1+{n+1\choose 2}-1}{{n+1\choose 2}}}\right \rfloor.
\end{equation*}}
\end{thm}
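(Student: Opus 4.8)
The plan is to deduce the index bound from Theorem 1.1 by exhibiting a large space of harmonic $1$-forms satisfying absolute boundary conditions and comparing the indices against the eigenvalue $0$ of $J$. First I would recall the key sign information supplied in the Introduction: for the constant function $u\equiv 1$ one has $Q(1)<0$, and more generally the fact that free boundary minimal hypersurfaces of convex bodies are unstable means that $\lambda_1(J)<0$. The strategy is to show that in fact $\lambda_j(J)<0$ for all $j$ up to the claimed bound, and then invoke the variational characterization of the Morse index: $\Ind(M)$ equals the number of negative eigenvalues of $J$ (counted with multiplicity), so $\Ind(M)\ge \#\{j : \lambda_j(J)<0\}$.

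Next I would bring in the harmonic $1$-forms. By the Hodge theorem for manifolds with boundary, the space of harmonic $1$-forms satisfying absolute boundary conditions is isomorphic to $H^1_a(M)$, hence has dimension $\beta^1_a$. This means that the eigenvalue $0$ of the Laplacian $\Delta_1$ (with absolute boundary conditions) has multiplicity exactly $\beta^1_a$: concretely, $\lambda_1(\Delta_1)=\cdots=\lambda_{\beta^1_a}(\Delta_1)=0$. Now apply Theorem 1.1. If $j$ is a positive integer with $m(j)={n+1\choose 2}(j-1)+1\le \beta^1_a$, then $\lambda_{m(j)}(\Delta_1)=0$, and Theorem 1.1 gives $\lambda_j(J)\le 0$. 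The largest such $j$ is $j_0=\big\lfloor \frac{\beta^1_a-1}{{n+1\choose 2}}\big\rfloor+1=\big\lfloor \frac{\beta^1_a+{n+1\choose 2}-1}{{n+1\choose 2}}\big\rfloor$, so we obtain $\lambda_{j_0}(J)\le 0$. The main obstacle is the strictness: Theorem 1.1 only yields $\lambda_j(J)\le 0$, whereas the index counts \emph{strictly} negative eigenvalues, so I need to rule out $\lambda_j(J)=0$ for these values of $j$, or else argue directly at the level of bilinear forms.

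To handle the strictness I would argue directly with the quadratic form $Q$ rather than with eigenvalues, which is how Theorem 1.1 is presumably proved anyway. The test functions producing $\lambda_j(J)\le\lambda_{m(j)}(\Delta_1)$ are built from components of harmonic $1$-forms paired against a fixed frame (the $\binom{n+1}{2}$ rotation fields of $\mathbb{R}^{n+1}$ restricted to $M$), so from a space of harmonic forms of dimension $\beta^1_a$ one manufactures a subspace $W\subset C^\infty(M)$ of dimension at least $j_0$ on which $Q(u)\le 0$ for all $u\in W$. If $Q$ were merely negative semidefinite on $W$, I would perturb: since $\lambda_1(J)<0$ strictly (by $Q(1)<0$), a generic small perturbation $W'=\{u+\epsilon\, c(u)\cdot 1 : u\in W\}$ for a suitable linear functional $c$ would make $Q$ strictly negative on $W'$ while preserving $\dim W'\ge j_0$; alternatively one shows the inequality in Theorem 1.1's proof is automatically strict because the relevant harmonic-form test functions are $L^2$-orthogonal to the constant function $1$, so adding a multiple of $1$ strictly decreases $Q$. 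Either way one concludes $\Ind(M)\ge j_0=\big\lfloor \frac{\beta^1_a+{n+1\choose 2}-1}{{n+1\choose 2}}\big\rfloor$, which is the assertion. I expect the bookkeeping in this last perturbation/orthogonality step — making sure the dimension does not drop and the strict sign is genuinely attained — to be the only delicate point; everything else is a direct combination of the Hodge theorem with Theorem 1.1.
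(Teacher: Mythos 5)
Your overall skeleton matches the paper's: the Hodge theorem identifies $\mathcal{H}_N^1(M)$ with $H^1_a(M)$, so $\lambda_1(\Delta_1)=\cdots=\lambda_{\beta_a^1}(\Delta_1)=0$; Theorem \ref{ER} then gives $\lambda_j(J)\leq 0$ whenever $m(j)\leq\beta_a^1$; and your computation of the largest such $j$, namely $j_0=\lfloor(\beta_a^1+\binom{n+1}{2}-1)/\binom{n+1}{2}\rfloor$, is exactly right. You also correctly identify the one delicate point: the index counts strictly negative eigenvalues, so $\lambda_{j_0}(J)\leq 0$ is not enough. But your proposed resolutions of the strictness do not work, and this is a genuine gap. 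The paper gets strictness from the boundary term, not from a perturbation: in the proof of Theorem \ref{ER} the averaged min--max inequality reads
\begin{equation*}
2\lambda_j(J)\int_M\|\xi\|^2\,dV\leq 2\lambda_{m(j)}(\Delta_1)\int_M\|\xi\|^2\,dV+4\int_{\partial M}h^{\partial B}(N,N)\|\xi\|^2\,dA,
\end{equation*}
and when $m(j)\leq\beta_a^1$ the test form $\omega$ is a (nonzero) harmonic field with absolute boundary conditions, so $\lambda_{m(j)}(\Delta_1)=0$ while unique continuation (Theorem 3.4.4 of \cite{Sch95}) forces $\omega|_{\partial M}\not\equiv 0$; combined with strict convexity $h^{\partial B}(N,N)<0$, the boundary integral is strictly negative and hence $\lambda_j(J)<0$. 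This is the content of the Remark following Theorem \ref{ER}, and it is the mechanism your argument is missing.

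Concretely, your two fallback arguments fail. First, the proof of Theorem \ref{ER} does not hand you a $j_0$-dimensional subspace $W$ with $Q|_W\leq 0$ on which you could then perturb; it produces, for each $j$, test functions orthogonal to $\phi_1,\dots,\phi_{j-1}$ whose $Q$-values are controlled only on average over $\overline{\mathcal{U}}\times\overline{\mathcal{U}}$. Second, even granting such a $W$ (e.g.\ the span of the first $j_0$ eigenfunctions), the perturbation $u\mapsto u+\epsilon\,c(u)\cdot 1$ with $c$ linear cannot make $Q$ strictly negative in general: $c$ vanishes on a hyperplane of $W$, so if the null cone of $Q|_W$ contains a subspace of dimension $\geq 2$ (which you cannot exclude a priori), the perturbed family still contains nonzero elements $u$ with $c(u)=0$ and $Q(u)=0$. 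The claimed $L^2$-orthogonality of the test functions to the constant $1$ is also unsubstantiated, and in any case $L^2$-orthogonality is not $Q$-orthogonality. To close the gap you should drop the perturbation entirely and instead carry the strictly negative term $4\int_{\partial M}h^{\partial B}(N,N)\|\xi\|^2\,dA$ through the averaged inequality, using convexity of $B$ and the fact that a nonzero harmonic field satisfying the absolute boundary conditions cannot vanish identically on $\partial M$.
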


\begin{cor}\label{ICor} If $M$ is an orientable free boundary minimal surface of a convex body in $\mathbb{R}^3$ with genus $g$ and $k$ boundary components, then
{\upshape\begin{equation*}
\Ind(M)\geq \left \lfloor{\frac{2g+k+1}{3}}\right \rfloor.
\end{equation*}} 
\end{cor}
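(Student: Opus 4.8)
The plan is to specialize Theorem~\ref{IB} to $n = 2$ and then identify the absolute Betti number $\beta_a^1$ with a classical topological invariant of $M$. When $n = 2$ one has ${n+1 \choose 2} = {3 \choose 2} = 3$, so Theorem~\ref{IB} reads
\begin{equation*}
\Ind(M) \geq \left\lfloor \frac{\beta_a^1 + 2}{3} \right\rfloor.
\end{equation*}

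The remaining point is to evaluate $\beta_a^1 = \dim H_a^1(M)$. By Hodge theory on a compact Riemannian manifold with boundary, the space of harmonic $1$-fields satisfying absolute boundary conditions is isomorphic to the de~Rham cohomology $H^1(M;\mathbb{R})$, so $\beta_a^1$ equals the ordinary first Betti number $b_1(M)$. Since $M$ is a compact orientable surface of genus $g$ with $k \geq 1$ boundary components (the boundary is nonempty because $M$ is a free boundary hypersurface), it is homotopy equivalent to a wedge of $2g + k - 1$ circles; equivalently, from $b_0(M) = 1$, $b_2(M) = 0$ and $\chi(M) = 2 - 2g - k$ one gets $b_1(M) = 1 - \chi(M) = 2g + k - 1$. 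Hence $\beta_a^1 = 2g + k - 1$.

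Substituting this value into the inequality above yields
\begin{equation*}
\Ind(M) \geq \left\lfloor \frac{(2g + k - 1) + 2}{3} \right\rfloor = \left\lfloor \frac{2g + k + 1}{3} \right\rfloor,
\end{equation*}
which is exactly the asserted bound. There is no real obstacle in this argument; the only step meriting attention is the identification of the analytically defined quantity $\beta_a^1$ appearing in Theorem~\ref{IB} — the dimension of the space of harmonic fields with absolute boundary conditions — with the purely topological number $2g + k - 1$, which is standard Hodge--de~Rham theory for manifolds with boundary.
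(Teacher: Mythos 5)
Your proposal is correct and follows the same route as the paper: specialize Theorem~\ref{IB} to $n=2$ (so ${3\choose 2}=3$) and substitute $\beta_a^1 = 2g+k-1$. The only divergence is in how that dimension count is justified: you invoke the standard Hodge--Morrey--Friedrichs identification of absolute harmonic fields with $H^1_{\mathrm{dR}}(M)$ and compute $b_1(M)$ from the Euler characteristic, whereas the paper, remarking that this fact is ``difficult to find in the literature,'' proves it from scratch in Appendix~\ref{DimCalc} via Lefschetz duality and an explicit decomposition of the relative harmonic fields into an exact part of dimension $k-1$ and a complement isomorphic to $H^1$ of the capped-off closed surface. Both justifications are valid; yours is shorter but leans on the cited isomorphism, while the paper's is self-contained.
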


Corollary \ref{ICor} provides new index bounds for free boundary minimal surfaces of $\mathbb{B}^3$ with large topology. In particular, it shows that $\text{Ind}(M)>3$ when $2g+k\geq 11$ and $\text{Ind}(M)$ tends to infinity as the genus or the number of boundary components tend to infinity.

The paper is structured as follows: In the second section, we outline the basic notation and conventions that we will use throughout the paper, and  give a brief introduction to the Hodge Laplacian on $p$-forms. Here, we define the Hodge Laplacian on $p$-forms and then focus on the special case when $p=1$. 
We also introduce the two main boundary conditions for the eigenvalue problem of the Laplacian for 1-forms on a manifold with boundary. 
In the third section, we provide several preliminary calculations that will ultimately allow us to see how the Jacobi operator acts on specific test functions, which will be needed to prove our main results.
We give the proofs of our two main results in the fourth section.

\vspace{1cm}

\section{Notation and Conventions}
Let $M^n$ be an orientable free boundary minimally immersed hypersurface in $B^{n+1}$ ($\partial M\neq \emptyset$). Throughout, we will let $N$ be a unit normal vector field on $M$. 

Let $D$ denote the Levi-Civita connection on $\mathbb{R}^{n+1}$ and $\nabla$ the Levi-Civita connection on $M$. We will let $A$ denote the second fundamental form of $M\subset B$, and $S$ the associated shape operator. That is, for $X,Y\in \Gamma(TM)$,
\begin{equation*}
\begin{split}
A(X,Y) &= \left(D_X Y\right)^N = \la{D_X Y,N}\ra\cdot N\\
S(X) & = -\left(D_XN\right)^T,
\end{split}
\end{equation*}
so that $\la{A(X,Y),N}\ra = \la{S(X),Y}\ra$

For any parallel vector field $\overline{V}$ in $\mathbb{R}^{n+1}$, we have the orthogonal decomposition
\begin{equation*}
\overline{V} = V+V^{N},
\end{equation*}
where $V\in TM$ is the orthogonal projection of $\overline{V}$ onto M and $V^{N} = \la{\overline{V},N}\ra\cdot N\in NM$. Since parallel vector fields on $\mathbb{R}^{n+1}$ and their orthogonal projections onto $M$ will be used throughout, we introduce the following vector spaces:
\begin{equation*}
\begin{split}
\overline{\mathcal{P}} &= \{\text{parallel vector fields on } \mathbb{R}^{n+1}\},\\
\mathcal{P} & = \{\text{vector fields on } M \text{ which are orthogonal projections of elements of } \overline{\mathcal{P}}\}.
\end{split}
\end{equation*}

Throughout, we will let $\Delta_p$ denote the Hodge Laplacian on $p$-forms (though the $p$ will usually be dropped for convenience) and we will  let $\nabla^*\nabla$ denote the rough Laplacian on vector fields. So, if $\omega$ is a $p$-form on $M$ and $\xi$ is a vector field on $M$, then
\begin{equation*}
\begin{split}
\Delta_p \omega & = (d\delta + \delta d)\omega\\
\nabla^*\nabla\xi & =-\sum_{j=1}^n\left(\nabla_{e_j}\nabla_{e_j}\xi-\nabla_{\nabla_{e_j}e_j}\xi\right),
\end{split}
\end{equation*}
where $d$ is the exterior derivative, $\delta$ is the codifferential, and $\{e_1,\ldots,e_n\}$ is any local orthonormal frame of $TM$.  Recall that a vector field $X$ is dual to a 1-form $\theta$ if and only if $\la{X,Y}\ra = \theta(Y)$ for all $Y\in\Gamma(TM)$. 
If $\xi$ is the vector field dual to $\omega$, then one can also define the Hodge Laplacian of $\xi$, denoted $\Delta\xi$, to be the vector field dual to the 1-form $\Delta_1\omega$.
The Bochner formula relates the two Laplacians:
\begin{equation*}
\Delta\xi = \nabla^*\nabla\xi +\Ric(\xi),
\end{equation*}
where $\Ric$ is seen as a symmetric endomorphism of $TM$.

To get a bound on the index of $M$, we will consider the following eigenvalue problem defined by the \emph{absolute} boundary conditions:
\begin{equation*}\label{AbsCond}
\begin{cases}
J_1\omega & =\lambda\omega,\\
i^*\iota_{\eta}\omega &= i^*\iota_{\eta}d\omega =0,
\end{cases}
\end{equation*}
where $i$ is the inclusion $\partial M\hookrightarrow M$,  $\iota_{\eta}$ denotes interior multiplication by $\eta$ and $J_1$ is the Jacobi operator on 1-forms defined by $J_1 = \Delta_1-\|A\|^2$. We will often drop the subscripts for convenience. These absolute boundary conditions are a generalization of Neumann boundary conditions for functions.
 We say that $\omega$ is \emph{tangential} on $\partial M$ if $i^*\iota_{\eta}\omega =0$, \textit{i.e.}, $\omega$ vanishes whenever its argument is normal to the boundary of $M$. So, if $\omega$ satisfies the absolute boundary conditions, then both $\omega$ and $d\omega$ are tangential ($d\omega$ is tangential whenever one of its arguments is normal to $\partial M$).

We define the following space of harmonic 1-forms
\begin{equation*}
\begin{split}
\mathcal{H}_N^1(M)& = \{\omega\in\Omega^1(M) \ | \ \Delta\omega =0, \ \omega \ \text{satisfies the absolute boundary conditions}\},\\
\end{split}
\end{equation*}
and note that $\beta_a^1 = \dim H^1_a(M) = \dim \mathcal{H}_N^1(M)$, where $H^1_a(M)$ is the first absolute cohomology group of $M$.

\vspace{1cm}

\section{Preliminary Calculations}
The calculations done here are analogous to those done by Savo in \cite{Sav10} for the case of a minimal hypersurface in $\mathbb{S}^{n+1}$. In $\mathbb{S}^{n+1}$, a hypersurface has two normal vectors (one tangent to the sphere and one normal to both the sphere and the hypersurface) whereas a free-boundary minimal hypersurface of a convex body $B^n$ just has one. The absence of a second normal vector simplifies many of the preliminary calculations. 
\begin{lem}\label{PPC} Let $\overline{V}\in\overline{\mathcal{P}}$ and let $V\in\mathcal{P}$ be its orthogonal projection onto $M$. Let $A$ and $S$ be the second fundamental form and shape operator (respectively) of the immersion $\phi:M\rightarrow B^n$. Then{\upshape
\begin{enumerate}[(a)]
\item $\nabla\la{\overline{V},N}\ra = -S(V)$.
\item $\Delta\la{\overline{V},N}\ra = |S|^2\la{\overline{V},N}\ra$.
\end{enumerate}}
\end{lem}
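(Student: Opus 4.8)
The plan is to compute both quantities directly using the structure equations for the immersion $\phi: M \to B^{n+1} \subset \mathbb{R}^{n+1}$, exploiting that $\overline V$ is parallel in $\mathbb{R}^{n+1}$ so that $D_X \overline V = 0$ for all $X$.

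For part (a), fix a point $p \in M$ and a local orthonormal frame $\{e_1,\dots,e_n\}$ of $TM$. For any $X \in \Gamma(TM)$ I compute $X\langle \overline V, N\rangle = \langle D_X \overline V, N\rangle + \langle \overline V, D_X N\rangle = \langle \overline V, D_X N\rangle$, using $D_X\overline V = 0$. Decomposing $D_X N$ into tangential and normal parts, the normal part is $\langle D_X N, N\rangle N = 0$ (since $|N|^2 \equiv 1$), so $D_X N = (D_X N)^T = -S(X)$ by the definition of the shape operator. Hence $X\langle \overline V, N\rangle = -\langle \overline V, S(X)\rangle = -\langle V, S(X)\rangle$ (since $S(X) \in TM$, only the tangential part $V$ of $\overline V$ contributes) $= -\langle S(V), X\rangle$ by self-adjointness of $S$. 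This says exactly $\nabla\langle \overline V, N\rangle = -S(V)$.

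For part (b), I would apply the divergence/trace. Recall $\Delta f = -\operatorname{div}(\nabla f)$ with my sign convention (positive spectrum), so $\Delta\langle \overline V, N\rangle = -\operatorname{div}(-S(V)) = \operatorname{div}(S(V)) = \sum_{j=1}^n \langle \nabla_{e_j}(S(V)), e_j\rangle$, evaluated at $p$ in a frame that is normal at $p$. Expanding $\nabla_{e_j}(S(V)) = (\nabla_{e_j}S)(V) + S(\nabla_{e_j}V)$, the first term contributes $\sum_j \langle (\nabla_{e_j}S)(V), e_j\rangle = \langle (\operatorname{div} S), V\rangle$, which vanishes by the Codazzi equation for a hypersurface of flat $\mathbb{R}^{n+1}$ together with minimality $H = \operatorname{tr} S = 0$ (Codazzi gives $(\nabla_X S)Y - (\nabla_Y S)X = 0$, so $\operatorname{div} S = \nabla(\operatorname{tr} S) = \nabla H = 0$). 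For the second term I need a formula for $\nabla_{e_j}V$: since $V = \overline V - \langle \overline V, N\rangle N$, I get $\nabla_{e_j} V = (D_{e_j}V)^T = (D_{e_j}\overline V - \langle\overline V,N\rangle D_{e_j}N - (e_j\langle\overline V,N\rangle)N)^T = -\langle\overline V,N\rangle (D_{e_j}N)^T = \langle\overline V,N\rangle S(e_j)$. Therefore $\sum_j \langle S(\nabla_{e_j}V), e_j\rangle = \langle\overline V,N\rangle \sum_j \langle S(S(e_j)), e_j\rangle = \langle\overline V,N\rangle \sum_j |S(e_j)|^2 = |S|^2 \langle\overline V, N\rangle$, which gives (b).

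The main obstacle — really the only nontrivial input beyond bookkeeping — is justifying the vanishing of $\operatorname{div} S$, i.e. correctly invoking Codazzi in the ambient-flat, minimal setting; everything else is careful application of the product rule and the Gauss–Weingarten relations. I should also double-check sign conventions throughout (the paper's $\Delta$ on functions and its shape-operator sign $S(X) = -(D_X N)^T$) so that the stated positive-sign result $\Delta\langle\overline V,N\rangle = |S|^2\langle\overline V,N\rangle$ comes out with the correct sign; a convenient sanity check is the equatorial disk, where $S \equiv 0$ and both sides vanish, and the critical catenoid, where $\langle\overline V,N\rangle$ for a suitable $\overline V$ should indeed be a Jacobi-type function.
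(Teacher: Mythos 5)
Your proof is correct and follows essentially the same route as the paper: part (a) is the same computation (you invoke self-adjointness of $S$ where the paper derives it inline from the tangency of $[X,V]$), and part (b) rests on exactly the same two inputs — the vanishing of $\operatorname{div}S$ via Codazzi plus minimality, and the identity $\nabla_{e_j}V=\la{\overline{V},N}\ra S(e_j)$ — merely packaged as $\operatorname{div}(S(V))$ rather than unwound through the second fundamental form $A$. Your version also makes explicit the final substitution that the paper's proof of (b) leaves implicit, and your sign conventions match the paper's ($\Delta=-\operatorname{div}\nabla$).
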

\begin{proof}
To show (a), take any $X\in\Gamma(TM)$. Then we have that
\begin{equation*}
\la{\nabla\la{\overline{V},N}\ra,X}\ra  = \la{D_X\overline{V}, N}\ra + \la{\overline{V}, D_X N}\ra = \la{\overline{V}, D_X N}\ra,
\end{equation*}
since $\overline{V}$ is parallel. Now, since $\la{N, D_X N}\ra = \frac{1}{2}X\left(\|N\|^2\right) \equiv 0$ and $[X,V]$ is tangent to $M$, we have that
\begin{equation*}
\la{\overline{V}, D_X N}\ra  = -\la{D_X V, N}\ra = -\la{D_V X, N}\ra = \la{X, \left(D_V N\right)^T}\ra.
\end{equation*}
Hence, $\nabla\la{\overline{V}, N}\ra = -S(V)$.

For (b), let $\{e_1, \ldots e_n\}$ denote normal coordinate vector fields centred at a point $p\in M$. Then (at $p$),
\begin{equation*}
\begin{split}
-\Delta\la{\overline{V}, N}\ra & = \sum_{i=1}^n \la{\nabla_{e_i}\nabla\la{\overline{V}, N}\ra, e_i}\ra = \sum_{i=1}^n \la{\nabla_{e_i}\left(D_V N\right)^T, e_i}\ra = -\sum_{i=1}^n e_i\la{N, D_V e_i}\ra\\
& = -\sum_{i=1}^n \la{D_{e_i} N, A(V, e_i)}\ra + \la{N, D_{e_i}A(V, e_i)}\ra.
\end{split}
\end{equation*}
Since $D_{e_i} N$ has no normal component, and $A$ is symmetric, we have that
\begin{equation*}
\Delta\la{\overline{V}, N}\ra = \sum_{i=1}^n\la{N, \left(D_{e_i}A(e_i, V)\right)^N}\ra.
\end{equation*}
Now, it follows from the Codazzi Equation that
\begin{equation*}
\left(D_{e_i}A(e_i, V)\right)^N = \left(D_VA(e_i, e_i)\right)^N-2A(e_i, \nabla_V e_i) +A(\nabla_{e_i}V, e_i).
\end{equation*}
However
$A(e_i, \nabla_V{e_i}) = \left(D_{e_i}\nabla_V e_i\right)^N$,
and, at $p$,
$\la{D_{e_i}\nabla_V e_i, N}\ra  = -\la{\nabla_V e_i, D_{e_i} N}\ra= 0$.
Moreover, since $M$ is minimal,
$\sum_{i=1}^n\left(D_VA(e_i, e_i)\right)^N = 0$.
Therefore,
\begin{equation*}
\Delta\la{\overline{V}, N}\ra  = \sum_{i=1}^n\la{N, A(\nabla_{e_i}V, e_i)}\ra = -\sum_{i=1}^n\la{\left(D_{e_i}N\right)^T, \nabla_{e_i}V}\ra.
\end{equation*}
\end{proof}

\begin{lem}\label{PC1} For any vector field $\xi\in\Gamma(TM)$ and any $\overline{V}\in\overline{P}$ with orthogonal projection $V$,{\upshape
\begin{enumerate}[(a)]
\item $\Delta\xi = \nabla^*\nabla\xi-S^2(\xi)$.
\item $\nabla^*\nabla V = S^2(V), \ \Delta V =0$.
\end{enumerate}
}
\end{lem}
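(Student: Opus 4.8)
The plan is to read part (a) off the Bochner formula $\Delta\xi=\nabla^*\nabla\xi+\Ric(\xi)$ quoted above, once we identify the Ricci operator of $M$. Since $M^n$ is a minimal hypersurface of the flat space $\mathbb{R}^{n+1}$, the Gauss equation gives, for $X,Y\in\Gamma(TM)$ and an orthonormal frame $\{e_i\}$, the identity $\la R(e_i,X)X,e_i\ra=\la S(e_i),e_i\ra\la S(X),X\ra-\la S(e_i),X\ra^2$; summing over $i$ and using $\Tr S=0$ (minimality) yields $\Ric(X,X)=\Tr S\cdot\la S(X),X\ra-|S(X)|^2=-\la S^2(X),X\ra$, i.e. $\Ric=-S^2$ as a symmetric endomorphism of $TM$. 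Substituting this into the Bochner formula gives $\Delta\xi=\nabla^*\nabla\xi-S^2(\xi)$, which is (a).

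For (b) I would first record the differentiation formula $\nabla_XV=\la\overline{V},N\ra\,S(X)$ for $X\in\Gamma(TM)$: differentiating the splitting $\overline{V}=V+\la\overline{V},N\ra N$ with the flat connection $D$ (so $D_X\overline{V}=0$) and taking tangential parts gives $\nabla_XV=(D_XV)^T=-\la\overline{V},N\ra(D_XN)^T=\la\overline{V},N\ra\,S(X)$. Two consequences follow. First, $\Div V=\sum_i\la\nabla_{e_i}V,e_i\ra=\la\overline{V},N\ra\,\Tr S=0$, so the dual $1$-form $V^{\flat}$ is coclosed. Second, writing $\overline{f}(x)=\la\overline{V},x\ra$ for the linear function on $\mathbb{R}^{n+1}$ whose ambient gradient is $\overline{V}$, its restriction $f=\overline{f}|_M$ has $\grad f=V$, so $V^{\flat}=df$ is exact, hence closed. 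Therefore $\Delta V^{\flat}=(d\delta+\delta d)V^{\flat}=0$, i.e. $\Delta V=0$; and then part (a) gives $0=\nabla^*\nabla V-S^2(V)$, i.e. $\nabla^*\nabla V=S^2(V)$, completing (b).

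I do not expect a genuine obstacle here: the work is bookkeeping, and the only point needing care is matching the Gauss-equation computation of $\Ric$, and the statement $\Tr S=0$ of minimality, to the sign conventions fixed in Section 2 for $S$, $\nabla^*\nabla$, and $\Delta$. As an alternative to invoking (a) for the first identity of (b), one can compute $\nabla^*\nabla V$ directly: in a frame normal at $p$, $\nabla^*\nabla V=-\sum_i\nabla_{e_i}\bigl(\la\overline{V},N\ra\,S(e_i)\bigr)$, and using Lemma \ref{PPC}(a) (so that $e_i\la\overline{V},N\ra=-\la S(V),e_i\ra$) together with the Codazzi identity $\sum_i(\nabla_{e_i}S)(e_i)=\grad(\Tr S)=0$, every term collapses to $\sum_i\la S(V),e_i\ra S(e_i)=S^2(V)$; then $\Delta V=0$ follows from (a). Either route is short.
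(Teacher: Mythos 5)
Your proof is correct. Part (a) follows the paper's own argument essentially verbatim: Gauss equation plus minimality gives $\Ric=-S^2$, and the Bochner formula does the rest. For part (b), however, you take a genuinely different route. The paper first proves $\nabla^*\nabla V=S^2(V)$ by a direct second-derivative computation --- it establishes $\nabla^*\nabla N=0$ via the Codazzi equation and minimality, then differentiates the decomposition $V=\overline{V}-\la\overline{V},N\ra N$ twice --- and only afterwards deduces $\Delta V=0$ from (a). You reverse the logical order: you observe that $V$ is the gradient of the restriction to $M$ of the linear function $x\mapsto\la\overline{V},x\ra$, so its dual $1$-form is exact (hence closed), while $\Div V=\la\overline{V},N\ra\,\Tr S=0$ by minimality (hence coclosed); therefore $\Delta V=0$ outright, and $\nabla^*\nabla V=S^2(V)$ then falls out of (a). Your main route is shorter and more conceptual --- it is the classical fact that coordinate functions on a minimal hypersurface are harmonic, packaged as a first-order divergence argument --- at the cost of routing the first identity of (b) through the Bochner formula rather than exhibiting it by direct computation. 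Your alternative computation at the end (differentiating $\nabla_{e_i}V=\la\overline{V},N\ra S(e_i)$ and using Lemma \ref{PPC}(a) together with the traced Codazzi identity) is essentially the paper's computation reorganized, so either version lands you where the paper does. The only point of care, which you flag yourself, is the sign convention $\Delta_0=\delta d=-\Div\grad$ used in Section 2; your closed-plus-coclosed argument is insensitive to it.
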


\begin{proof} Let $\{e_1,\ldots, e_n\}$ be local normal coordinate vector fields centred at a point $p\in M$. Then,
using the minimality of $M$ and the Gauss equation, we have that (at $p$)
\begin{equation*}
\text{Ric}(\xi) =\sum_{i,k=1}^nR_M(e_k,e_i, \xi, e_k)e_i  = -\sum_{i,k=1}^n \la{A(e_k, \xi), A(e_i, e_k)}\ra e_i.
\end{equation*}
Now 
\begin{equation*}
\begin{split}
-\sum_{k=1}^n \la{A(e_k, \xi), A(e_i, e_k)}\ra = -\sum_{k=1}^n \la{e_k, D_{\xi}N}\ra\la{e_k, D_{e_i}N}\ra & = - \la{(D_{\xi}N)^T, D_{e_i}N}\ra\\
& = \la{D_{(D_{\xi}N)^T}e_i+[e_i, (D_{\xi}N)^T], N}\ra\\
& = -\la{e_i, S^2(\xi)}\ra.
\end{split}
\end{equation*}
Therefore, $\text{Ric}(\xi) = -S^2(\xi)$, and (a) follows from the Bochner formula. 

To see that $\nabla^*\nabla V=S^2(V)$, we'll first show that $\nabla^*\nabla N=0$ in the sense that if $\{e_1,\ldots, e_n\}$ are again local normal coordinate vector fields centred at $p\in M$, then, at $p$,
\begin{equation*}
\sum_{i=1}^n\left(D_{e_i}(D_{e_i}N)^T\right)^T=0.\label{DN0}
\end{equation*}
Since, $D_{e_i}N$ is tangential,
\begin{equation*}
\begin{split}
\sum_{i=1}^n\left(D_{e_i}(D_{e_i}N)^T\right)^T  = \sum_{i,j=1}^n\la{D_{e_i}D_{e_i}N, e_j}\ra e_j
& = -\sum_{i,j=1}^ne_i\la{N, D_{e_i}e_j}\ra e_j\\
& = -\sum_{i,j=1}^n\la{N, D_{e_i}(A(e_j,e_i))}\ra e_j.
\end{split}
\end{equation*}
Now, it follows from the Codazzi Equation that $\left(D_{e_i}(A(e_j,e_i))\right)^N = \left(D_{e_j}(A(e_i,e_i))\right)^N$.
So, again using the minimality of $M$, we have that
\begin{equation*}
\sum_{i=1}^n\left(D_{e_i}(D_{e_i}N)^T\right)^T =-\sum_{i,j=1}^n\la{N, D_{e_i}(A(e_j,e_i))}\ra e_j = -\sum_{i,j=1}^n\la{N, D_{e_j}(A(e_i,e_i))}\ra e_j = 0.
\end{equation*}

Now, if we write $V=\overline{V} - \la{\overline{V}, N}\ra N$, then we can use this calculation and the fact that $\overline{V}$ is parallel to help us calculate $\nabla^*\nabla V$. 
\begin{equation*}
\begin{split}
\nabla^*\nabla V &= \sum_{i=1}^n \left(D_{e_i}(D_{e_i}\overline V)^T\right)^T - \left(D_{e_i}(D_{e_i}(\la{\overline{V}, N}\ra N))^T\right)^T\\
& = -\sum_{i=1}^n\left(D_{e_i}(\la{\overline{V}, N}\ra D_{e_i}N)\right)^T = -\left(\sum_{i=1}^ne_i(\la{\overline{V}, N}\ra) D_{e_i}N\right)-\la{\overline{V},N}\ra \nabla^*\nabla N\\
& = -\sum_{i=1}^n\la{V, D_{e_i}N}\ra D_{e_i}N = -\sum_{i=1}^n\la{e_i,D_V N}\ra D_{e_i}N = S^2(V).
\end{split}
\end{equation*}
The fact that $\Delta V = 0$ now follows from (a).
\end{proof}

\begin{lem}\label{LapIP} Let $\overline{V},\overline{W}\in\overline{\mathcal{P}}$ and let $V, W\in\mathcal{P}$ be their orthogonal projections onto $M$. Then, for any $\xi\in\Gamma(TM)$,{\upshape
\begin{enumerate}[(a)]
\item $\Delta\la{V,\xi}\ra = 2\la{S(V),S(\xi)}\ra + \la{V,\Delta\xi}\ra -2\la{\overline{V}, N}\ra\la{S,\nabla\xi}\ra$.
\item $\la{\nabla\la{\overline{V},N}\ra,\nabla\la{W,\xi}\ra}\ra = -\la{\overline{W},N}\ra\la{S(V),S(\xi)}\ra-\la{W,\nabla_{S(V)}\xi}\ra$.
\item $\Delta(\la{\overline{V},N}\ra\la{W, \xi}\ra) = |S|^2 \la{\overline{V}, N}\ra\la{W, \xi}\ra +2(\la{\overline{W}, N}\ra\la{S(V), S(\xi)}\ra+\la{W, \nabla_{S(V)}\xi}\ra)\\
\textcolor{white}{.} \qquad  \qquad\qquad  \qquad + \la{\overline{V}, N}\ra(2\la{S(W), S(\xi)}\ra+\la{W, \Delta\xi}\ra-2\la{\overline{W}, N}\ra\la{S, \nabla\xi}\ra)$.
\end{enumerate}
}
\end{lem}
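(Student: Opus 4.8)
The plan is to derive all three identities from a single Bochner-type product rule together with Lemmas \ref{PPC} and \ref{PC1}. The one extra ingredient needed is the pointwise identity $\nabla_X V = \la{\overline{V},N}\ra S(X)$ valid for every $X\in\Gamma(TM)$; it follows at once by writing $V = \overline{V} - \la{\overline{V},N}\ra N$, differentiating, using that $D_X\overline{V}=0$ and that $D_XN$ is tangential with $D_XN=-S(X)$, and then taking tangential parts. Besides this I will use the scalar product rule $\Delta(fg) = (\Delta f)g + f(\Delta g) - 2\la{\nabla f,\nabla g}\ra$ and its vector-field analogue $\Delta\la{X,Y}\ra = \la{\nabla^*\nabla X,Y}\ra + \la{X,\nabla^*\nabla Y}\ra - 2\sum_i\la{\nabla_{e_i}X,\nabla_{e_i}Y}\ra$, both obtained by the same normal-coordinate computation used in the proof of Lemma \ref{PPC} (here $\Delta$ is the positive Laplacian, which agrees with $\nabla^*\nabla$ on functions).

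For (a), I apply the vector-field product rule with $X=V$ and $Y=\xi$. By Lemma \ref{PC1}(b), $\nabla^*\nabla V = S^2(V)$, and by Lemma \ref{PC1}(a), $\nabla^*\nabla\xi = \Delta\xi + S^2(\xi)$. Since $S$ is self-adjoint, $\la{S^2(V),\xi}\ra = \la{V,S^2(\xi)}\ra = \la{S(V),S(\xi)}\ra$, so the two rough-Laplacian terms combine to $2\la{S(V),S(\xi)}\ra + \la{V,\Delta\xi}\ra$. For the remaining term the auxiliary identity gives $\nabla_{e_i}V = \la{\overline{V},N}\ra S(e_i)$, whence $\sum_i\la{\nabla_{e_i}V,\nabla_{e_i}\xi}\ra = \la{\overline{V},N}\ra\sum_i\la{S(e_i),\nabla_{e_i}\xi}\ra = \la{\overline{V},N}\ra\la{S,\nabla\xi}\ra$, and (a) follows.

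For (b), Lemma \ref{PPC}(a) gives $\nabla\la{\overline{V},N}\ra = -S(V)$, so the left-hand side equals $-\la{S(V),\nabla\la{W,\xi}\ra}\ra = -S(V)\!\left(\la{W,\xi}\ra\right) = -\la{\nabla_{S(V)}W,\xi}\ra - \la{W,\nabla_{S(V)}\xi}\ra$. Applying the auxiliary identity with $X = S(V)$ yields $\nabla_{S(V)}W = \la{\overline{W},N}\ra S^2(V)$, hence $\la{\nabla_{S(V)}W,\xi}\ra = \la{\overline{W},N}\ra\la{S(V),S(\xi)}\ra$, which is (b). Finally, for (c) I use the scalar product rule with $f=\la{\overline{V},N}\ra$ and $g=\la{W,\xi}\ra$, substituting $\Delta f = |S|^2\la{\overline{V},N}\ra$ from Lemma \ref{PPC}(b), $\Delta g$ from part (a) with $W$ in place of $V$, and $\la{\nabla f,\nabla g}\ra$ from part (b); collecting terms and tracking the sign of the $-2\la{\nabla f,\nabla g}\ra$ contribution gives the asserted formula. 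I do not expect a genuine obstacle here: once the auxiliary identity $\nabla_X V = \la{\overline{V},N}\ra S(X)$ and Lemmas \ref{PPC}, \ref{PC1} are in hand, the only work is the bookkeeping of the $S(V)$, $S(W)$, $S(\xi)$ contractions in part (c).
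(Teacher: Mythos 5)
Your proposal is correct and follows essentially the same route as the paper: the scalar and vector-field product rules for the (positive) Laplacian in normal coordinates, combined with Lemmas \ref{PPC} and \ref{PC1}, plus the auxiliary identity $\nabla_X V = \la{\overline{V},N}\ra S(X)$, which the paper also derives inside its proof. The sign bookkeeping in parts (b) and (c) checks out against the stated formulas.
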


\begin{proof}
Let $\{e_1,\ldots, e_n\}$ be local normal coordinate vector fields centred at a point $p\in M$. Then, at $p$,
\begin{equation*}
\Delta\la{V, \xi}\ra = \la{\nabla^*\nabla V, \xi}\ra -2\la{\nabla V, \nabla \xi}\ra + \la{V, \nabla^*\nabla \xi}\ra,
\end{equation*}
where $\la{\nabla V, \nabla \xi}\ra = \sum_{i=1}^n\la{\nabla_{e_i}V, \nabla_{e_i}\xi}\ra$.
From Lemma \ref{PC1} we have that $\nabla^*\nabla V = S^2(V)$ and $\nabla^*\nabla\xi = \Delta\xi+S^2(\xi)$. We also have that
\begin{equation*}
\la{S^2(V),\xi}\ra  = -\la{N, D_{D_V N}\xi}\ra = \la{D_{\xi}N, D_V N}\ra = \la{S(\xi), S(V)}\ra,
\end{equation*}
and, similarly, $\la{V, S^2(\xi)}\ra = \la{S(V), S(\xi)}\ra$. Therefore,
\begin{equation*}
\Delta\la{V, \xi}\ra = 2\la{S(V), S(\xi)}\ra+\la{V, \Delta\xi}\ra -2\la{\nabla V, \nabla \xi}\ra.
\end{equation*}
Finally,
\begin{equation*}
\la{\nabla_{e_i} V, \nabla_{e_i}\xi}\ra = \la{D_{e_i}(\overline{V}-\la{\overline{V}, N}\ra N), \nabla_{e_i}\xi}\ra = \la{\overline{V}, N}\ra\la{S(e_i), \nabla_{e_i}\xi}\ra.
\end{equation*}
Hence, summing over $i$ gives us that
\begin{equation*}
\Delta\la{V, \xi}\ra = 2\la{S(V), S(\xi)}\ra+\la{V, \Delta\xi}\ra -2\la{\overline{V}, N}\ra\la{S, \nabla\xi}\ra.
\end{equation*}

From Lemma \ref{PPC}(a) we know that $\nabla\la{\overline{V}, N}\ra = -S(V)$, so we just need to calculate $\nabla\la{W, \xi}\ra$. First, notice that for any vector field $X$ on $M$, since $\overline{W}$ is parallel,
\begin{equation*}
\nabla_X W  = \left(D_X(\overline{W} - \la{\overline{W}, N}\ra N)\right)^T\ = \la{\overline{W}, N}\ra S(X)
\end{equation*}
Hence, 
\begin{equation*}
\la{\nabla\la{W, \xi}\ra, X}\ra  =X(\la{W, \xi}\ra) = \la{\overline{W}, N}\ra\la{S(X), \xi}\ra + \la{W, \nabla_X\xi}\ra.
\end{equation*}
So, for $X=-S(V) \ (=\nabla\la{\overline{V}, N}\ra)$, we have that
\begin{equation*}
\begin{split}
\la{\nabla\la{\overline{V}, N}\ra, \nabla\la{W, \xi}\ra}\ra & = -\la{\overline{W}, N}\ra\la{S^2(V), \xi}\ra  -\la{W, \nabla_{S(V)}\xi}\ra\\
& = -\la{\overline{W}, N}\ra\la{S(V), S(\xi)}\ra  -\la{W, \nabla_{S(V)}\xi}\ra.\\
\end{split}
\end{equation*}
Now (c) follows from (a) and (b) and Lemma \ref{PPC}(b).
\end{proof}

Let $\overline{\mathcal{U}} = \{\overline{V}\in\overline{\mathcal{P}} \ | \ \|V\|\equiv 1\}$. Then $\mathcal{U}$ can naturally be identified with $S^n$ if we endow it with the measure $\mu = \frac{n+1}{\text{Vol}(S^n)}dv_{S^n}$. 
\begin{lem}\label{IC} For any $\overline{X}, \overline{Y}\in\mathbb{R}^{n+1}$, 
\begin{equation*}
\int_{\overline{\mathcal{U}}} \la{\overline{V}, \overline{X}}\ra\la{\overline{V}, \overline{Y}}\ra\, d\overline{V} = \la{\overline{X}, \overline{Y}}\ra.
\end{equation*}
\end{lem}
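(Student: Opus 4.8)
The identity to be proven is a standard "averaging over the sphere" computation. The key observation is that $\overline{\mathcal{U}}$ is identified with the unit sphere $S^n\subset\mathbb{R}^{n+1}$ (a parallel field $\overline{V}$ with $\|V\|\equiv 1$ is determined by its constant value, a unit vector), equipped with the normalized measure $\mu$ which has total mass $\int_{\overline{\mathcal{U}}}d\overline{V}=\frac{n+1}{\operatorname{Vol}(S^n)}\operatorname{Vol}(S^n)=n+1$. So the claim is equivalent to $\frac{1}{\operatorname{Vol}(S^n)}\int_{S^n}v_i v_j\,dv_{S^n}=\frac{1}{n+1}\delta_{ij}$ in coordinates, i.e.\ the matrix with entries $\int_{S^n}v_iv_j$ is a multiple of the identity with the right trace.

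First I would argue the bilinear form $B(\overline{X},\overline{Y}):=\int_{\overline{\mathcal{U}}}\langle\overline{V},\overline{X}\rangle\langle\overline{V},\overline{Y}\rangle\,d\overline{V}$ is symmetric and invariant under the orthogonal group $O(n+1)$: the group acts on $\overline{\mathcal{U}}\cong S^n$ preserving the measure $dv_{S^n}$ (hence $\mu$), and a change of variables $\overline{V}\mapsto R\overline{V}$ shows $B(R\overline{X},R\overline{Y})=B(\overline{X},\overline{Y})$ for all $R\in O(n+1)$. An $O(n+1)$-invariant symmetric bilinear form on $\mathbb{R}^{n+1}$ must be a scalar multiple of the standard inner product, so $B(\overline{X},\overline{Y})=c\,\langle\overline{X},\overline{Y}\rangle$ for some constant $c$.

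To pin down $c$, take the trace: choosing an orthonormal basis $\{\overline{E}_1,\dots,\overline{E}_{n+1}\}$ of $\mathbb{R}^{n+1}$,
\begin{equation*}
c(n+1)=\sum_{k=1}^{n+1}B(\overline{E}_k,\overline{E}_k)=\int_{\overline{\mathcal{U}}}\sum_{k=1}^{n+1}\langle\overline{V},\overline{E}_k\rangle^2\,d\overline{V}=\int_{\overline{\mathcal{U}}}\|\overline{V}\|^2\,d\overline{V}=\int_{\overline{\mathcal{U}}}1\,d\overline{V},
\end{equation*}
using that $\overline{V}\in\overline{\mathcal{U}}$ has $\|V\|\equiv 1$, hence (as $V$ is the full vector, no normal component to subtract off in $\mathbb{R}^{n+1}$ itself — here $\overline{V}$ is literally a unit vector) $\|\overline{V}\|^2=1$. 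Since $\int_{\overline{\mathcal{U}}}d\overline{V}=\mu(S^n)=n+1$, we get $c(n+1)=n+1$, so $c=1$, which is exactly the claimed identity.

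**Main obstacle.** There is no serious obstacle; the only point requiring a little care is bookkeeping the normalization of $\mu$ so that the total mass comes out to $n+1$ rather than $1$ or $\operatorname{Vol}(S^n)$, and being consistent about whether $\overline{V}$ denotes the unit vector itself (so $\|\overline{V}\|=1$) as opposed to its tangential projection $V$. One could equally avoid the representation-theoretic shortcut and compute $\int_{S^n}v_1^2\,dv_{S^n}$ directly (it equals $\operatorname{Vol}(S^n)/(n+1)$ by symmetry among the coordinates summing to $\int_{S^n}\|v\|^2=\operatorname{Vol}(S^n)$), but the invariance argument is cleanest and also delivers the off-diagonal vanishing for free.
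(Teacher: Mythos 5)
Your proof is correct, but it takes a different route from the paper's. The paper simply asserts that the identity follows from a direct, tedious computation in spherical coordinates using the reduction formula $\int\sin^m x\,dx=-\frac{1}{m}\sin^{m-1}x\cos x+\frac{m-1}{m}\int\sin^{m-2}x\,dx$; no invariance argument appears. Your approach --- observing that $B(\overline{X},\overline{Y})=\int_{\overline{\mathcal{U}}}\la\overline{V},\overline{X}\ra\la\overline{V},\overline{Y}\ra\,d\overline{V}$ is an $O(n+1)$-invariant symmetric bilinear form, hence a multiple of the Euclidean inner product, and then fixing the constant by a trace computation against the total mass $\mu(S^n)=n+1$ --- is cleaner and structurally more illuminating: it explains \emph{why} the normalization $\frac{n+1}{\mathrm{Vol}(S^n)}$ is the right one and gets the off-diagonal vanishing without any coordinate computation. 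The paper's brute-force route has the modest virtue of requiring no appeal to the classification of invariant bilinear forms, but your argument is self-contained anyway (the classification follows in one line from diagonalizability plus invariance under permutations and sign changes of coordinates), and your bookkeeping of the measure is accurate.
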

The proof of Lemma \ref{IC} follows from a direct, but tedious, calculation after changing to spherical coordinates and repeatedly applying the integral identity
\begin{equation*}
\int \sin^mx\, dx = -\frac{1}{m}\sin^{m-1}x\cos x + \frac{m-1}{m}\int \sin^{m-2}x\, dx.
\end{equation*}

\bigskip

The following lemma was originally proved by Ros \cite{Ro09} for free boundary minimal surfaces in a smooth domain in $\mathbb{R}^3$. Here, we extend his proof to obtain the analogous result for free boundary minimal hypersurfaces in smooth domains in $\mathbb{R}^n$.
\begin{lem}\label{BC}
Suppose $\xi$ is a vector field on $M$ dual to a 1-form $\omega$ which satisfies the absolute boundary conditions. Then, at a point $p\in\partial M$,
\begin{equation*}
\la{\nabla_{\eta}\xi, \xi}\ra =h^{\partial B}(N,N)\|\xi\|^2.
\end{equation*}
\end{lem}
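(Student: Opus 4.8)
The plan is to exploit the free boundary condition, which says that $\eta$ is the outward unit normal to $\partial B$ along $\partial M$, together with the fact that along $\partial M$ the normal $N$ to $M$ is tangent to $\partial B$. The starting point is the absolute boundary condition $i^*\iota_\eta\omega = 0$, i.e. $\la \xi,\eta\ra = 0$ on $\partial M$, so $\xi$ is tangent to $\partial M$ at $p$. I will then differentiate the identity $\la\xi,\eta\ra\equiv 0$ along $\partial M$: for any $X\in T_p(\partial M)$, $0 = X\la\xi,\eta\ra = \la\nabla_X\xi,\eta\ra + \la\xi,\nabla_X\eta\ra$, which lets me reexpress $\la\nabla_X\xi,\eta\ra$ in terms of the second fundamental form of $\partial M\subset M$. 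The other half of the computation comes from the second absolute boundary condition $i^*\iota_\eta d\omega = 0$, which says $d\omega(\eta,X) = 0$ for all $X$ tangent to $\partial M$; writing $d\omega(\eta,X) = \la\nabla_\eta\xi,X\ra - \la\nabla_X\xi,\eta\ra$ this relates the quantity I want, $\la\nabla_\eta\xi,\xi\ra$, to tangential derivatives of $\xi$.

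Concretely, I would split $\nabla_\eta\xi$ into its component along $\xi$ (the target quantity), and note that because $\xi$ is tangent to $\partial M$ we only need $\la\nabla_\eta\xi,\xi\ra$. Using $d\omega(\eta,\xi) = 0$ gives $\la\nabla_\eta\xi,\xi\ra = \la\nabla_\xi\xi,\eta\ra$. Now $\la\nabla_\xi\xi,\eta\ra = -\la\xi,\nabla_\xi\eta\ra$ since $\la\xi,\eta\ra\equiv 0$ on $\partial M$ and $\xi$ is tangent to $\partial M$ there. So the whole problem reduces to computing $\la\nabla_\xi\eta,\xi\ra$, i.e. the shape operator of $\partial M$ inside $M$ with respect to the conormal $\eta$, evaluated on the boundary-tangent vector $\xi$.

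The last step is the geometric heart of the matter: I must identify $-\la\nabla_\xi\eta,\xi\ra$ (the scalar second fundamental form of $\partial M\subset M$ in the $\eta$ direction) with $h^{\partial B}(N,N)\|\xi\|^2$. This is where the free boundary hypothesis and the ambient convexity enter. Along $\partial M$, the ambient space $\mathbb R^{n+1}$ has the splitting into $T(\partial M)$, the line spanned by $\eta$, and the line spanned by $N$; since $M$ meets $\partial B$ orthogonally, $\{\eta, \text{(tangent to }\partial M)\}$ spans $T(\partial B)$ and $N$ is tangent to $\partial B$. I will compare the second fundamental form of $\partial M$ as a submanifold of $M$ with that of $\partial M$ as a submanifold of $\partial B$, and use that $M$ is totally geodesic "in the normal-to-$\partial M$ direction along $\eta$" only up to the ambient second fundamental form of $M$ — but $\la A(\xi,\xi),N\ra$ does not obviously vanish. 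The trick (following Ros) is to instead differentiate the relation defining $\eta$: since $\eta \perp N$ and $\eta\perp T(\partial M)$ within $T(\partial B)$, and $N$ is the $\partial B$-normal data, one computes $\la\nabla_\xi\eta,\xi\ra = \la D_\xi\eta,\xi\ra$ (the tangential part) and relates $D_\xi\eta$ to $D_\xi N$ via the orthonormality of the frame $\{\eta, N\}$ in the normal bundle of $\partial M\subset \mathbb R^{n+1}$, giving $\la D_\xi\eta,\xi\ra = -\la D_\xi N, \xi\ra \cdot(\text{correction})$ — more precisely one shows $-\la\nabla_\xi\eta,\xi\ra = h^{\partial B}(N,N)\|\xi\|^2$ by writing $\xi$ as a $\partial B$-tangent vector and using $\la D_\xi\xi, N\ra$ computed two ways.

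I expect the main obstacle to be precisely this last identification: carefully bookkeeping the three relevant second fundamental forms (of $M\subset\mathbb R^{n+1}$, of $\partial M\subset M$, and of $\partial B\subset\mathbb R^{n+1}$) and the orthogonality relations among $N$, $\eta$, and $T(\partial M)$ so that the unwanted terms (those involving $A(\xi,\xi)$ and the second fundamental form of $\partial M$ in $M$) cancel, leaving only $h^{\partial B}(N,N)\|\xi\|^2$. The cleanest route is probably to work in $T(\partial B)$ from the start: $\xi$ and $\nabla^M$-data restricted to $\partial M$ can be compared with the induced connection on $\partial B$, and the free boundary condition $N\in T(\partial B)$ makes $h^{\partial B}(N,N)$ the natural output. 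The algebraic identities of Lemma \ref{PPC} and Lemma \ref{PC1} are not needed here; this is genuinely a boundary computation, and the only external input is the definition of free boundary (orthogonal meeting) plus the absolute boundary conditions on $\omega$.
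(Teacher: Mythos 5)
Your reduction is exactly the paper's: from $i^*\iota_\eta\omega=0$ you get $\xi\in T_p(\partial M)$, then $i^*\iota_\eta d\omega=0$ applied to the pair $(\eta,\xi)$ gives $\la{\nabla_\eta\xi,\xi}\ra=(\nabla_\eta\omega)(\xi)=(\nabla_\xi\omega)(\eta)=\la{\nabla_\xi\xi,\eta}\ra$, and this is precisely how the paper proceeds. The gap is in your final step, which you correctly identify as the heart of the matter but then attack with the wrong tool. There is no need to compare the three second fundamental forms, no $A(\xi,\xi)$ term to cancel, and the proposed manoeuvre of relating $D_\xi\eta$ to $D_\xi N$ ``via the orthonormality of the frame $\{\eta,N\}$ up to a correction'' does not lead anywhere: $\eta$ and $N$ are not interchangeable normal directions of $\partial M$ in any way that produces the claimed identity. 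The one observation you are missing is that the free boundary condition makes $\eta$ the outward unit normal of $\partial B$ at $p$ while $\xi\in T_p(\partial M)\subset T_p(\partial B)$, so
\begin{equation*}
\la{\nabla_\xi\xi,\eta}\ra=\la{D_\xi\xi-\la{D_\xi\xi,N}\ra N,\,\eta}\ra=\la{D_\xi\xi,\eta}\ra=h^{\partial B}(\xi,\xi),
\end{equation*}
because $\la{N,\eta}\ra=0$ kills the only potentially troublesome term. That single line is the paper's entire final step; the ``unwanted terms'' you worry about never appear.

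One further point, in your favour: your instinct that identifying the answer with $h^{\partial B}(N,N)\|\xi\|^2$ is delicate is sound. The computation above yields $h^{\partial B}(\xi,\xi)$, and the paper's last equality $h^{\partial B}(\xi,\xi)=h^{\partial B}(N,N)\|\xi\|^2$ is asserted without justification; it holds when $\partial B$ is umbilic (e.g.\ $B=\mathbb{B}^{n+1}$, the case of Ros's original lemma) but not for a general convex body. For the purposes of Theorem \ref{ER} this is harmless, since convexity gives $h^{\partial B}(\xi,\xi)<0$ just as well as $h^{\partial B}(N,N)\|\xi\|^2<0$, so the correct statement to prove and use is $\la{\nabla_\eta\xi,\xi}\ra=h^{\partial B}(\xi,\xi)$. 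Had you pushed your computation through to $h^{\partial B}(\xi,\xi)$ and stopped there, rather than trying to force the stated right-hand side, your argument would have closed.
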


\begin{proof}
Let $\eta$ be the (outward pointing) conormal vector along $\partial M$. Then, since $\omega$ satisfies the absolute boundary conditions on $\partial M$, at $p$ we have that
\begin{equation*}
\begin{split}
\omega(\eta) &= 0,\\
d\omega(\eta,t) & = \eta(\omega(t))-t(\omega(\eta))-\omega([\eta,t]) = 0,
\end{split}
\end{equation*}
for any vector $t\in T_p(\partial M)$. In particular, if $\xi$ is the vector field dual to $\omega$, the the first condition implies that $\xi_p\in T_p(\partial M)$, and so the second condition implies that $d\omega (\eta, \xi)=0$ at $p$. Now,
\begin{equation*}
\la{\xi, \nabla_\eta\xi}\ra = \eta\la{\xi, \xi}\ra - \la{\xi, \nabla_\eta\xi}\ra = (\nabla_{\eta}\omega)(\xi),
\end{equation*}
and we claim that $(\nabla_\eta\omega)(\xi) = (\nabla_{\xi}\omega)(\eta)$. To see this, note that, by definition,
\begin{equation*}
(\nabla_{\xi}\omega)(\eta) - (\nabla_{\eta}\omega)({\xi}) = {\xi}(\omega(\eta))-\omega(\nabla_{\xi} \eta) -{\eta}(\omega(\xi))+\omega(\nabla_{\eta}\xi).
\end{equation*}
However,
\begin{equation*}
\omega(\nabla_{\xi}\eta) - \omega(\nabla_{\eta}{\xi}) = \omega(\nabla_{\xi}\eta -\nabla_{\eta}{\xi}) = \omega([{\xi},\eta]),
\end{equation*}
and, since $d\omega(\eta, {\xi}) = 0$, $\omega([\eta, {\xi}]) = \eta(\omega({\xi})) -{\xi}(\omega(\eta))$. Therefore 
\begin{equation*}
(\nabla_{\xi}\omega)(\eta) - (\nabla_{\eta}\omega)({\xi}) = {\xi}(\omega(\eta))-{\eta}(\omega(\xi))+\omega([\eta, {\xi}])=0.
\end{equation*}
So, 
\begin{equation*}
\la{\xi, \nabla_{\eta} \xi}\ra = (\nabla_{\eta}\omega)(\xi) = (\nabla_{\xi}\omega)(\eta).
\end{equation*}
Now, since $\xi$ is tangent to $\partial M$ and $\omega(\eta)=0$ on $\partial M$,
\begin{equation*}
(\nabla_{\xi}\omega)(\eta) = \xi(\omega(\eta))-\omega(\nabla_{\xi}\eta) = \la{\nabla_{\xi}\xi, \eta}\ra = h^{\partial B}(\xi,\xi).
\end{equation*}
Hence,
\begin{equation*}
\la{\nabla_{\eta}\xi,\xi}\ra = \la{\xi, \nabla_{\eta} \xi}\ra =h^{\partial B}(\xi,\xi)=h^{\partial B}(N,N)\|\xi\|^2.
\end{equation*}
\end{proof}

\section{Proofs of Main Theorems}

\subsection{Eigenvalue Relationship}
\begin{thmnonum}{\upshape \textbf{\ref{ER}}} Let $M^n$ be an orientable free boundary minimal hypersurface of a convex body in $\mathbb{R}^{n+1}$ with Jacobi operator $J$. Then, for all positive integers $j$, one has that
\begin{equation*}
\lambda_j(J)\leq\lambda_{m(j)}(\Delta_1),
\end{equation*}
where $m(j) = {n+1\choose 2}(j-1)+1$ and $\lambda_{m(j)}(\Delta_1)$ is the $m(j)$th eigenvalue of the Laplacian eigenvalue problem with absolute boundary conditions.
\end{thmnonum}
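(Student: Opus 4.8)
The plan is to transplant Savo's scheme from \cite{Sav10} from the sphere to a Euclidean convex body, using parallel vector fields of $\mathbb{R}^{n+1}$ in place of the Killing fields of $\mathbb{S}^{n+1}$. To a $1$-form $\omega$ on $M$ with dual vector field $\xi$, and to a pair $\overline{V},\overline{W}\in\overline{\mathcal{P}}$ with orthogonal projections $V,W\in\mathcal{P}$, I associate the test function
\[
\phi^{\omega}_{\overline{V},\overline{W}}\;=\;\la\overline{V},N\ra\la W,\xi\ra-\la\overline{W},N\ra\la V,\xi\ra ,
\]
which is antisymmetric under $\overline{V}\leftrightarrow\overline{W}$, so that as $\overline{V},\overline{W}$ vary these functions span a space of dimension at most $\binom{n+1}{2}=\dim\Lambda^2\mathbb{R}^{n+1}$ for each fixed $\omega$. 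Fixing an orthonormal basis $\overline{e}_1,\ldots,\overline{e}_{n+1}$ of $\mathbb{R}^{n+1}$, I regard $\omega\mapsto\Phi^{\omega}:=(\phi^{\omega}_{\overline{e}_a,\overline{e}_b})_{a<b}$ as a linear map into $\mathcal{C}:=\bigoplus_{a<b}C^{\infty}(M)$, equipped with $\mathcal{Q}(\Phi):=\sum_{a<b}Q(\Phi_{ab})$ and $\|\Phi\|^2:=\sum_{a<b}\|\Phi_{ab}\|^2_{L^2(M)}$. Since $\mathcal{Q}$ is simply $\binom{n+1}{2}$ copies of the form $Q$, its $k$th eigenvalue (counted with multiplicity) is $\lambda_{\lceil k/\binom{n+1}{2}\rceil}(J)$; in particular the $m(j)$th eigenvalue of $\mathcal{Q}$ is exactly $\lambda_j(J)$.

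The core of the argument is the pair of identities, valid for every $\omega$ satisfying the absolute boundary conditions,
\[
\|\Phi^{\omega}\|^2=\int_M|\omega|^2\,dV,\qquad \mathcal{Q}(\Phi^{\omega})\;\le\;\int_M\big(|d\omega|^2+|\delta\omega|^2\big)\,dV ,
\]
the second up to a common positive constant and possibly an error term of the correct sign. The norm identity is elementary: pointwise $\sum_{a<b}(\phi^{\omega}_{\overline{e}_a,\overline{e}_b})^2=|N|^2|\xi|^2-\la N,\xi\ra^2=|\omega|^2$ since $|N|=1$ and $\xi\perp N$. For the quadratic form I expand $Q(\phi^{\omega}_{\overline{V},\overline{W}})=\int_M\big(|\nabla\phi^{\omega}_{\overline{V},\overline{W}}|^2-\|A\|^2(\phi^{\omega}_{\overline{V},\overline{W}})^2\big)\,dV+\int_{\partial M}h^{\partial B}(N,N)(\phi^{\omega}_{\overline{V},\overline{W}})^2\,dA$ and sum over $a<b$. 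Lemmas \ref{PPC}, \ref{PC1} and \ref{LapIP} supply $\nabla\la\overline{V},N\ra=-S(V)$, $\nabla_X W=\la\overline{W},N\ra S(X)$, the Bochner identity with $\Ric=-S^2$, and the expansions of the relevant Laplacians; crucially, antisymmetrising in $\overline{V}\wedge\overline{W}$ kills the terms of $\Delta(\la\overline{V},N\ra\la W,\xi\ra)$ that are symmetric in $\overline{V},\overline{W}$. Summing over the orthonormal basis (equivalently, averaging over $\overline{\mathcal{U}}$ and using Lemma \ref{IC}) collapses the remaining sums into $\int_M|\nabla\xi|^2$, $\int_M\la\Ric\xi,\xi\ra$ and $\|A\|^2$-type terms in $\xi$ alone, while the boundary contribution is $\sum_{a<b}\int_{\partial M}h^{\partial B}(N,N)(\phi^{\omega}_{\overline{e}_a,\overline{e}_b})^2=\int_{\partial M}h^{\partial B}(N,N)|\omega|^2$, which by Lemma \ref{BC} equals $\int_{\partial M}\la\nabla_{\eta}\xi,\xi\ra$ --- precisely the boundary term produced when $\int_M\la\nabla^*\nabla\xi,\xi\ra$ is integrated by parts. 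Since, under the absolute boundary conditions, $\int_M\la\Delta_1\omega,\omega\ra=\int_M(|d\omega|^2+|\delta\omega|^2)$, the pieces assemble into the stated inequality.

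Granting the identities, the theorem follows by a min-max count. Let $F$ be the span of the first $m(j)$ eigenforms of $\Delta_1$ with absolute boundary conditions, so $\dim F=m(j)$ and $\int_M(|d\omega|^2+|\delta\omega|^2)\le\lambda_{m(j)}(\Delta_1)\int_M|\omega|^2$ for all $\omega\in F$; hence $\mathcal{Q}(\Phi^{\omega})\le\lambda_{m(j)}(\Delta_1)\|\Phi^{\omega}\|^2$ on $F$. The map $\omega\mapsto\Phi^{\omega}$ is injective: if every $\phi^{\omega}_{\overline{e}_a,\overline{e}_b}$ vanishes identically then, at each point, writing $N=\sum N_a\overline{e}_a$ and $\xi=\sum\xi_a\overline{e}_a$, one has $N_a\xi_b=N_b\xi_a$ for all $a,b$, which together with $N\ne 0$ and $\la N,\xi\ra=0$ forces $\xi\equiv 0$. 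Therefore $\Phi^{\omega}(F)$ is an $m(j)$-dimensional subspace of $\mathcal{C}$ on which the Rayleigh quotient of $\mathcal{Q}$ is at most $\lambda_{m(j)}(\Delta_1)$, so the $m(j)$th eigenvalue of $\mathcal{Q}$ --- which we identified with $\lambda_j(J)$ --- is at most $\lambda_{m(j)}(\Delta_1)$.

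The main obstacle is the verification of the second identity: the expansion via Lemma \ref{LapIP} produces a large number of terms, and one must check that antisymmetrisation in $\overline{V}\wedge\overline{W}$ together with summation over the orthonormal basis cancels all of the $\|A\|^2$-dependence (the target $\int_M(|d\omega|^2+|\delta\omega|^2)$ contains none), leaving only $\int_M|\nabla\xi|^2+\int_M\la\Ric\xi,\xi\ra$ and the boundary integral, and that any residual term has the sign needed for ``$\le$''. The boundary computation via Lemma \ref{BC} is the other delicate point, being the only place the free boundary condition and the geometry of $\partial B$ enter.
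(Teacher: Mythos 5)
Your proposal is correct and is essentially the paper's own argument: the same antisymmetric test functions $\la\overline{V},N\ra\la W,\xi\ra-\la\overline{W},N\ra\la V,\xi\ra$, the same Lemmas \ref{PPC}, \ref{PC1}, \ref{LapIP}, \ref{IC} and \ref{BC}, and the same use of the convexity of $\partial B$ to discard the boundary term (your two hedged identities do hold exactly, with constant $1$ in the $\sum_{a<b}$ normalization and error term $2\int_{\partial M}h^{\partial B}(N,N)\|\xi\|^2<0$). The only difference is the bookkeeping of the min--max: the paper imposes $\binom{n+1}{2}(j-1)$ orthogonality conditions against the first $j-1$ Jacobi eigenfunctions to select a single $\omega$ in the span of the first $m(j)$ eigenforms, whereas you run the min--max on the direct sum of $\binom{n+1}{2}$ copies of $Q$ and use injectivity of $\omega\mapsto\Phi^{\omega}$; the two counts are equivalent and yield the same $m(j)$.
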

\medskip

\begin{lem}\label{JC} For $\overline{V}, \overline{W}\in\overline{\mathcal{P}}$,  let 
\begin{equation*}
X_{V,W} = \la{\overline{V},N}\ra W - \la{\overline{W}, N}\ra V.
\end{equation*}
Let $\xi$ be any vector field on $M$ and consider the function $u=\la{X_{V,W}, \xi}\ra$. Then
\begin{equation*}
Ju = \la{X_{V,W},\Delta\xi}\ra +2v,
\end{equation*}
where $v$ is the smooth function
\begin{equation*}
v = \la{\nabla_{S(V)}\xi, W}\ra - \la{\nabla_{S(W)}\xi, V}\ra.
\end{equation*}
\end{lem}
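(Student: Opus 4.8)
The plan is to compute $Ju = \Delta u - \|A\|^2 u$ directly, using the linearity of $\Delta$ applied to the two terms comprising $u = \langle X_{V,W},\xi\rangle = \langle \overline{V},N\rangle\langle W,\xi\rangle - \langle \overline{W},N\rangle\langle V,\xi\rangle$. Each of these two terms is exactly of the form handled by Lemma \ref{LapIP}(c), so the bulk of the work is bookkeeping: apply \ref{LapIP}(c) once with the roles $(\overline{V},\overline{W})$ and once with the roles swapped to $(\overline{W},\overline{V})$, and subtract.

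First I would write, using Lemma \ref{LapIP}(c),
\begin{equation*}
\begin{split}
\Delta(\langle\overline{V},N\rangle\langle W,\xi\rangle) &= |S|^2\langle\overline{V},N\rangle\langle W,\xi\rangle + 2\langle\overline{W},N\rangle\langle S(V),S(\xi)\rangle + 2\langle W,\nabla_{S(V)}\xi\rangle\\
&\quad + \langle\overline{V},N\rangle\big(2\langle S(W),S(\xi)\rangle + \langle W,\Delta\xi\rangle - 2\langle\overline{W},N\rangle\langle S,\nabla\xi\rangle\big),
\end{split}
\end{equation*}
and the analogous identity for $\Delta(\langle\overline{W},N\rangle\langle V,\xi\rangle)$ obtained by interchanging $V\leftrightarrow W$ (equivalently $\overline{V}\leftrightarrow\overline{W}$). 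Subtracting, the $|S|^2$ terms combine to give exactly $\|A\|^2 u$ (since $|S|^2 = \|A\|^2$), which will cancel against the $-\|A\|^2 u$ in $Ju$. Among the remaining terms, the pair $2\langle\overline{W},N\rangle\langle S(V),S(\xi)\rangle$ and (from the swapped identity, with a minus sign) $2\langle\overline{V},N\rangle\langle S(W),S(\xi)\rangle$ must be collected against the $2\langle\overline{V},N\rangle\langle S(W),S(\xi)\rangle$ and $2\langle\overline{W},N\rangle\langle S(V),S(\xi)\rangle$ coming from the $\langle\overline{V},N\rangle(\cdots)$ and $\langle\overline{W},N\rangle(\cdots)$ brackets; by the symmetry $\langle S(V),S(\xi)\rangle$ these quantities appear with opposite signs and cancel in pairs. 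Similarly the two $\langle\overline{V},N\rangle\langle\overline{W},N\rangle\langle S,\nabla\xi\rangle$ terms cancel against each other. What survives is precisely $\langle X_{V,W},\Delta\xi\rangle$ from the $\langle W,\Delta\xi\rangle$ and $-\langle V,\Delta\xi\rangle$ contributions, plus $2(\langle W,\nabla_{S(V)}\xi\rangle - \langle V,\nabla_{S(W)}\xi\rangle) = 2v$.

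The main obstacle is purely organizational rather than conceptual: one must be careful that the three occurrences of $S(\cdot),S(\xi)$-type terms and the two occurrences of $\langle S,\nabla\xi\rangle$-type terms are matched with the correct coefficients $\langle\overline{V},N\rangle$ or $\langle\overline{W},N\rangle$ so that all of them cancel, leaving only the $\Delta\xi$ terms and the $\nabla_{S(\cdot)}\xi$ terms. A useful sanity check is that the right-hand side $\langle X_{V,W},\Delta\xi\rangle + 2v$ is manifestly antisymmetric under $V\leftrightarrow W$, as $u$ itself is, so any surviving symmetric combination would signal an error. This also confirms smoothness of $v$: it is a sum of inner products of smooth vector fields.
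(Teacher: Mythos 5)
Your proposal is correct and is exactly the paper's argument: the paper likewise applies Lemma \ref{LapIP}(c) to each of the two terms of $u$, subtracts, and notes that ``after some cancellations'' one obtains $\Delta u = |S|^2u + \la{X_{V,W},\Delta\xi}\ra + 2v$, whence $Ju = \la{X_{V,W},\Delta\xi}\ra + 2v$. Your bookkeeping of which terms cancel is accurate.
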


\begin{proof}[Proof of Lemma \ref{JC}]
Since $u = \la{X_{V,W}, \xi}\ra = \la{\overline{V}, N}\ra\la{W, \xi}\ra - \la{\overline{W}, N}\ra\la{V, \xi}\ra$, from part (c) of Lemma \ref{LapIP}, (after some cancellations) we get that
\begin{equation*}
\Delta u = |S|^2u+\la{X_{V,W},\Delta\xi}\ra + 2v,
\end{equation*}
and so $Ju = \la{X_{V, W}, \Delta\xi}\ra+ 2v$.
\end{proof}

\begin{proof}[Proof of Theorem \ref{ER}]
Let $\{\phi_1, \phi_2,\ldots, \}$ be an orthonormal basis for $L^2(M)$ given by eigenfunctions of $J$, where $\phi_i$ is an eigenfunction associated to $\lambda_i(J)$. Let $V^m(\Delta_1) = \bigoplus_{i=1}^m E^N_{\lambda_i(\Delta_1)}$, where $E^N_{\lambda_i(\Delta_1)}$ is the space of eigenforms of $\Delta_1$ associated with $\lambda_1(\Delta_1)$ with absolute boundary conditions. We want to find $\omega\in V^m(\Delta_1)$, $\omega\not\equiv 0$, for which
\begin{equation}
\int_{M}\la{X_{V,W},\xi}\ra\phi_i dV = 0,\label{SOE}
\end{equation}
for $i=1,\ldots, j-1$ and for all $\overline{V}, \overline{W}\in\overline{\mathcal{P}}$, where $\xi$ is the vector field dual to $\omega$. Since $X_{V,W}$ is a skew-symmetric bilinear function of $\overline{V}, \overline{W}$, and since $\text{dim}\overline{\mathcal{P}} = \dim\mathbb{R}^{n+1}=n+1$,  there are $\binom{n+1}{2}$ equations that need to be satisfied in (\ref{SOE}) for each $i$, and therefore $\binom{n+1}{2} (j-1)$ homogeneous linear equations in total. So, if $m(j) = \binom{n+1}{2} (j-1)+1$, then we're guaranteed that there is a $\omega\in V^{m(j)}(\Delta_1)$, $\omega\not\equiv 0$, whose dual vector field satisfies (\ref{SOE}) for all $V, W$ and for $i=1,\ldots j-1$.
From the min-max principle and Lemma \ref{JC} we have that,
\begin{equation}\label{MMI}
\begin{split}
\lambda_j(J)\int_M u^2\,dV & \leq \int_M uJu\,dV + \int_{\partial M}\left(\frac{\partial u}{\partial \eta} +h^{\partial B}(N,N)u\right)u\,dA\\
& = \int_Mu\la{X_{V, W}, \Delta\xi}\ra\, dV+ 2\int_Muv\, dV +\int_{\partial M}\left(\frac{\partial u}{\partial \eta} +h^{\partial B}(N,N)u\right)u\,dA.\\
\end{split}
\end{equation}
In addition, 
\begin{equation*}
\begin{split}
\frac{\partial u}{\partial \eta} &= \eta\left(\la{\overline{V}, N}\ra\la{W, \xi}\ra - \la{\overline{W}, N}\ra\la{\overline{V}, \xi}\ra\right)\\
& = \la{\overline{V}, D_{\eta}N}\ra\la{W, \xi}\ra + \la{\overline{V}, N}\ra\left(\la{D_{\eta}\overline{W}, \xi}\ra + \la{\overline{W}, D_{\eta}\xi}\ra\right)\\
&\quad  - \la{\overline{W}, D_{\eta}N}\ra\la{V, \xi}\ra + \la{\overline{W}, N}\ra\left(\la{D_{\eta}\overline{V}, \xi}\ra + \la{\overline{V}, D_{\eta}\xi}\ra\right).\\
\end{split}
\end{equation*}

We'll now use an integration technique that exploits Lemma \ref{IC} to help us simplify (\ref{MMI}). We'll then apply Lemma \ref{BC} to get the claimed eigenvalue relationship.  

Using the product metric on $\overline{\mathcal{U}}\times \overline{\mathcal{U}}$, Lemma \ref{IC} implies that (pointwise)
\begin{equation*}
\begin{split}
&\int_{\overline{\mathcal{U}}\times \overline{\mathcal{U}}} u^2\,d\overline{V}d\overline{W}  = 2\|\xi\|^2,\\
&\int_{\overline{\mathcal{U}}\times \overline{\mathcal{U}}} u\la{X_{V,W}, \Delta\xi}\ra\, d\overline{V}d\overline{W}  = 2\la{\xi, \Delta\xi}\ra,\\
&\int_{\overline{\mathcal{U}}\times \overline{\mathcal{U}}} uv\, d\overline{V}d\overline{W}  = 0,\\
& \int_{\overline{\mathcal{U}}\times \overline{\mathcal{U}}} u\la{\overline{V}, D_{\eta}N}\ra\la{\overline{W}, \xi}\ra \,d\overline{V}d\overline{W} = 0,\\
& \int_{\overline{\mathcal{U}}\times \overline{\mathcal{U}}} u\la{\overline{V}, N}\ra \la{\overline{W}, D_{\eta} \xi}\ra\, d\overline{V}d\overline{W} = \la{\xi, D_{\eta}\xi}\ra = \frac{1}{2}\eta(\|\xi\|^2). 
\end{split}
\end{equation*}
Therefore, integrating (\ref{MMI}) over $\overline{\mathcal{U}}\times \overline{\mathcal{U}}$ yields
\begin{equation*}
2\lambda_j(J) \int_M\|\xi\|^2\, dV\leq 2\int_M\la{\xi, \Delta\xi}\ra\, dV + \int_{\partial M}\left( \eta(\|\xi\|^2) +2h^{\partial B}(N,N)\|\xi\|^2\right) dA.
\end{equation*}
From Lemma \ref{BC} we know that $\eta(\|\xi\|^2) =2h^{\partial B}(N,N)\|\xi\|^2$ on $\partial M$, since $\xi$ is the dual vector field of a 1-form satisfying the absolute boundary conditions. Moreover, since $\xi$ is the dual vector field to a linear combination of eigenforms of $\Delta_1$, it now follows that
\begin{equation*}
2\lambda_j(J)\int_M\|\xi\|^2\, dV\leq 2\lambda_{m(j)}(\Delta_1)\int_{M}\|\xi\|^2\, dV +4 \int_{\partial M}h^{\partial B}(N,N)\|\xi\|^2 \, dA.
\end{equation*}
Since $h^{\partial B}(U,U)<0$ for any vector tangent to $\partial B$, we get that
\begin{equation*}
2\lambda_j(J)\int_M\|\xi\|^2\, dV\leq 2\lambda_{m(j)}(\Delta_1)\int_{M}\|\xi\|^2\, dV.
\end{equation*}
Now, since $\omega\not\equiv0$, we can divide both sides by the $L^2(M)$-norm of $\xi$ to get 
\begin{equation*}
\lambda_j(J) \leq \lambda_{m(j)}(\Delta_1).
\end{equation*}
\end{proof}

\begin{rmk} We note that when $m(j)\leq\dim\mathcal{H}_N^1(M)$, \emph{i.e.} when $\omega$ is a linear combination of harmonic forms and therefore a harmonic form itself, we actually get the strict inequality $\lambda_j(J)<\lambda_{m(j)}(\Delta_1)=0$. This follows from the fact that $\omega\not\equiv 0$ implies that $\omega|_{\partial M}\not\equiv 0$ (see Theorem 3.4.4 on p.131 of \cite{Sch95}), and so we get the strict inequality $4\int_{\partial M} h^{\partial M}(N,N)\|\xi\|^2\,dA<0$.
\end{rmk}

\subsection{Index Bound} 

\medskip

\begin{thmnonum}{\upshape\textbf{\ref{IB}} (Index Bound)}  If $M$ is an orientable free boundary minimal hypersurface of a convex body in $\mathbb{R}^{n+1}$, then
{\upshape\begin{equation*}
\Ind(M) \geq \left \lfloor{\frac{\beta_a^1+{n+1\choose 2}-1}{{n+1\choose 2}}}\right \rfloor.
\end{equation*}}
\end{thmnonum}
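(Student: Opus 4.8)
The plan is to deduce Theorem \ref{IB} from Theorem \ref{ER} by a counting argument, exactly mirroring how Savo obtains his index bounds in \cite{Sav10}. First I would recall the variational characterization of the index: $\Ind(M)$ equals the number of strictly negative eigenvalues of $J$ (counted with multiplicity) in the eigenvalue problem with the boundary condition $\frac{\partial u}{\partial\eta}+h^{\partial B}(N,N)u=0$, i.e. $\Ind(M) = \#\{\,i : \lambda_i(J)<0\,\}$. So it suffices to produce a lower bound on the number of indices $j$ for which $\lambda_j(J)<0$.

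The key input is that the Laplacian on $1$-forms with absolute boundary conditions has a kernel of dimension exactly $\beta_a^1 = \dim\mathcal{H}_N^1(M)$, so $\lambda_1(\Delta_1)=\cdots=\lambda_{\beta_a^1}(\Delta_1)=0$. Now apply Theorem \ref{ER}: for every positive integer $j$ with $m(j)={n+1\choose 2}(j-1)+1 \leq \beta_a^1$ we get $\lambda_j(J)\leq\lambda_{m(j)}(\Delta_1)=0$. In fact, as noted in the Remark following Theorem \ref{ER}, when $m(j)\leq\dim\mathcal{H}_N^1(M)$ the inequality is strict, so $\lambda_j(J)<0$ for all such $j$. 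Therefore $\Ind(M)$ is at least the number of positive integers $j$ satisfying ${n+1\choose 2}(j-1)+1\leq\beta_a^1$, which is the largest such $j$, namely $j = \left\lfloor\frac{\beta_a^1-1}{{n+1\choose 2}}\right\rfloor + 1 = \left\lfloor\frac{\beta_a^1+{n+1\choose 2}-1}{{n+1\choose 2}}\right\rfloor$. This gives the stated bound, and Corollary \ref{ICor} then follows by substituting $\beta_a^1 = 2g+k-1$ for an orientable surface of genus $g$ with $k$ boundary components (this is the standard formula; the absolute first Betti number of such a surface with $k\geq 1$ is $2g+k-1$).

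The parts that need care rather than real difficulty are: (i) confirming that the min-max/variational characterization of $\Ind(M)$ really does count the negative eigenvalues of $J$ under the Robin-type boundary condition appearing in $Q$ — this is standard elliptic theory for the quadratic form $Q$, whose negative eigenspace is finite-dimensional because the zeroth-order terms are bounded; and (ii) the dimension count $\dim\ker(\Delta_1 \text{ with absolute b.c.}) = \beta_a^1$, which is the Hodge–Morrey decomposition on manifolds with boundary and is already recorded in the Notation section via the identification $\beta_a^1 = \dim\mathcal{H}_N^1(M)$. The only genuine subtlety is making sure the strict inequality from the Remark is invoked (or, alternatively, that equality $\lambda_j(J)=0$ can be excluded) so that the $j$-th eigenvalue of $J$ is \emph{negative} and not merely nonpositive; without strictness one would only conclude that the index is at least that count minus the multiplicity of a possible zero eigenvalue. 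Since the Remark supplies the strict inequality whenever $m(j)\leq\beta_a^1$, this is handled. I expect no serious obstacle; the proof is a short corollary of Theorem \ref{ER} together with the arithmetic of the floor function.
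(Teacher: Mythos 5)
Your proposal is correct and follows essentially the same route as the paper: combine Theorem \ref{ER} with the fact that $\lambda_{m(j)}(\Delta_1)=0$ whenever $m(j)\leq\beta_a^1=\dim\mathcal{H}_N^1(M)$, invoke the strict inequality from the Remark to get $\lambda_j(J)<0$, and then do the floor-function arithmetic. The paper's proof is just a terser version of the same argument, with the strictness and the variational characterization of the index left implicit.
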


\medskip

\begin{proof} Suppose $j$ is such that $m(j) \leq \dim\mathcal{H}_N^1(M):=\beta_a^1$. Then $\lambda_j(J)<\lambda_{m(j)}(\Delta)= 0$, so $\Ind(M)\geq j$. Now, $m(j) = {n+1\choose 2}(j-1)+1\leq \beta_a^1$, so $j\leq \left \lfloor{\frac{\beta_a^1+{n+1\choose 2}-1}{{n+1\choose 2}}}\right \rfloor$. Hence, $\Ind(M)\geq \left \lfloor{\frac{\beta_a^1+{n+1\choose 2}-1}{{n+1\choose 2}}}\right \rfloor$.
\end{proof}

\begin{cornonum}{\upshape\textbf{\ref{ICor}}}  If $M$ is an orientable free boundary minimal surface in a convex body in $\mathbb{R}^3$ with genus $g$ and $k$ boundary components, then
{\upshape\begin{equation*}
\Ind(M)\geq \left \lfloor{\frac{2g+k+1}{3}}\right \rfloor.
\end{equation*}} 
\end{cornonum}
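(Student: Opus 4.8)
The plan is simply to specialize Theorem \ref{IB} to the case $n=2$ and then perform the two elementary computations it requires: evaluating the relevant binomial coefficient and identifying the first absolute Betti number $\beta_a^1$ of $M$ in terms of $g$ and $k$. Since $M$ is a surface we have $n=2$, so $\binom{n+1}{2}=\binom{3}{2}=3$, and Theorem \ref{IB} becomes
\begin{equation*}
\Ind(M)\geq\left\lfloor\frac{\beta_a^1+2}{3}\right\rfloor.
\end{equation*}
So everything reduces to computing $\beta_a^1=\dim\mathcal{H}_N^1(M)=\dim H_a^1(M)$.

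Next I would recall that the first absolute cohomology group $H_a^1(M)$ is just the ordinary first (de Rham / singular) cohomology $H^1(M;\mathbb{R})$, as noted in Section 2; it is the relative cohomology $H^1(M,\partial M)$ that would contribute the ``extra'' classes. Since $M$ is a free boundary minimal surface, $\partial M\neq\emptyset$, so $M$ is a compact, connected, orientable surface of genus $g$ with $k\geq 1$ boundary circles. Such a surface has Euler characteristic $\chi(M)=2-2g-k$ and deformation retracts onto a wedge of $1-\chi(M)=2g+k-1$ circles; hence $\beta_a^1=b_1(M)=2g+k-1$.

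Substituting this into the displayed inequality gives
\begin{equation*}
\Ind(M)\geq\left\lfloor\frac{(2g+k-1)+3-1}{3}\right\rfloor=\left\lfloor\frac{2g+k+1}{3}\right\rfloor,
\end{equation*}
which is the claimed bound. I do not expect any genuine obstacle in this argument; the only points requiring a modicum of care are (i) to use the \emph{absolute} Betti number — the one that actually enters Theorem \ref{IB} through $\dim\mathcal{H}_N^1(M)$ — rather than the relative one, and (ii) to use $\partial M\neq\emptyset$ so that the correct formula is $b_1(M)=2g+k-1$ and not $2g$. Everything else is a direct substitution.
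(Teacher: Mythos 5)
Your proposal is correct and, at the level of the corollary itself, matches the paper exactly: specialize Theorem \ref{IB} to $n=2$ (so $\binom{3}{2}=3$), insert $\beta_a^1=2g+k-1$, and simplify the floor. The only divergence is in how the identity $\beta_a^1=2g+k-1$ is justified. The paper does not take this as a purely topological fact; it devotes Appendix \ref{DimCalc} to proving $\dim\mathcal{H}_N^1(M)=2g+k-1$ directly, by passing to $\mathcal{H}_D^1(M)$ via Lefschetz duality, splitting off the exact Dirichlet harmonic fields (dimension $k-1$, via solvability of the Dirichlet problem with prescribed boundary constants) and identifying the orthogonal complement with $H^1(\overline{M})$ for the closed surface $\overline{M}$ obtained by capping the boundary circles (dimension $2g$). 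You instead invoke the isomorphism $\mathcal{H}_N^1(M)\cong H^1(M;\mathbb{R})$ and compute $b_1(M)=1-\chi(M)=2g+k-1$ from the Euler characteristic. That isomorphism is true — it is the Hodge--Morrey--Friedrichs theorem for absolute boundary conditions, available in the cited book of Schwarz — and it makes your route shorter and more standard; but note that Section 2 of the paper only records $\beta_a^1=\dim H_a^1(M)=\dim\mathcal{H}_N^1(M)$ and does not itself assert the identification of $H^1_a(M)$ with ordinary cohomology, so you should cite the Hodge decomposition theorem (or the paper's appendix) for that step rather than Section 2. With that reference supplied, your argument is complete.
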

\begin{proof} Since $\beta_a^1 = 2g+k-1$ for a surface (see Appendix \ref{DimCalc}), this follows directly from Theorem \ref{IB}.
\end{proof}

\begin{rmk} We note that Corollary \ref{ICor} can also be obtained by using the work of Ros. In \cite{Ro09}, Ros shows that if $\omega$ is a harmonic 1-form and $\xi$ is its dual vector field, then 
\begin{equation*}
\Delta\xi + \|A\|^2\xi = 2\la{\nabla\omega, A}\ra N,
\end{equation*}
and, if $\omega$ satisfies the absolute boundary conditions, then
\begin{equation*}
\la{\nabla_{\eta}\xi, \xi}\ra = h^{\partial B}(N,N)\|\xi\|^2.
\end{equation*}

 So, for $\xi = (\xi_1, \xi_2, \xi_3)$, if we use the notation $Q(\xi, \xi) = \sum_{i=1}^3Q(\xi_i, \xi_i)$. 
Now, assuming $\omega\not\equiv 0$,  
\begin{equation*}
\begin{split}
Q(\xi, \xi) & = -\int_M\la{\Delta\xi+\|A\|^2\xi, \xi}\ra dV + \int_{\partial M}(\la{\nabla_{\eta}\xi, \xi}\ra +h^{\partial B}(N,N)\|\xi\|^2) dA\\
& = 2\int_{\partial M}h^{\partial B}(N,N)\|\xi\|^2dA<0.
\end{split}
\end{equation*}
Hence $Q(X, X)<0$, and we get that $\dim\mathcal{H}_N^1(M)-3\cdot\Ind(M) = (2g+k-1)-3\cdot \Ind(M)\leq 0$, or $\Ind(M)\geq \lceil{\frac{(2g+k-1)}{3}}\rceil = \lfloor{\frac{2g+k+1}{3}}\rfloor$. 
\end{rmk}

\vspace{1cm}

\begin{appendix}
\section{The Dimension of the Space of Harmonic 1-Forms with Dirichlet Boundary Condition}\label{DimCalc}

It is well-known, we believe, that if $M$ is a surface with boundary $\partial M\neq\emptyset$, genus $g$ and $k$ boundary components, then $\dim \mathcal{H}_N^1(M ) = 2g + k -1$, but this result seems difficult to find in the literature. We give a proof here for completeness. When $M$ is a surface, it follows from Lefschetz duality that $\dim\mathcal{H}^1_N(M) = \dim\mathcal{H}^{2-1}_D(M)$, where $\mathcal{H}_D^1(M)$ is the space of harmonic 1-forms on $M$ which satisfy the \emph{relative} boundary conditions:
\begin{equation*}
i^*\omega = i^*\delta\omega =0,
\end{equation*}
where $i:\partial M \hookrightarrow M$ is the inclusion. So, to prove that $\dim \mathcal{H}_N^1(M ) = 2g + k -1$, we will show that $\dim \mathcal{H}_D^1(M ) = 2g + k -1$.

\begin{lem} Let $M$ be an orientable surface of genus $g$ with $k$ boundary components. Then $\dim\mathcal{H}^1_D(M) = 2g+k-1$.
\end{lem}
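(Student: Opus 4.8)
The plan is to identify $\mathcal{H}^1_D(M)$ with a relative de Rham cohomology group and then compute that group by elementary topology. First I would invoke the Hodge--Morrey(--Friedrichs) decomposition for compact Riemannian manifolds with boundary, as developed in Schwarz \cite{Sch95} (already cited in this paper): the space of smooth harmonic $1$-fields on $M$ satisfying the relative boundary conditions $i^*\omega = i^*\delta\omega = 0$ is isomorphic to the relative singular cohomology $H^1(M,\partial M;\mathbb{R})$. Thus $\dim\mathcal{H}^1_D(M) = \dim H^1(M,\partial M;\mathbb{R})$, and it remains to show this number is $2g+k-1$.

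Second, since $M$ is a compact oriented surface, Lefschetz duality gives $H^1(M,\partial M;\mathbb{R}) \cong H_1(M;\mathbb{R})$, and as all of these spaces are finite dimensional the universal coefficient theorem gives $\dim H_1(M;\mathbb{R}) = \dim H^1(M;\mathbb{R}) =: b_1(M)$. So the problem reduces to computing the first Betti number of $M$. (Alternatively one can read off $\dim H^1(M,\partial M;\mathbb{R})$ directly from the long exact cohomology sequence of the pair $(M,\partial M)$, using $H^0(\partial M;\mathbb{R})=\mathbb{R}^k$, $H^1(\partial M;\mathbb{R})=\mathbb{R}^k$, and $H^2(M,\partial M;\mathbb{R})\cong H_0(M;\mathbb{R})=\mathbb{R}$; the alternating-sum identity then yields the same answer once $b_1(M)$ is known.)

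Third I would compute $b_1(M)$ via the Euler characteristic. Because $\partial M\neq\emptyset$, the surface $M$ is homotopy equivalent to a finite connected graph (its fundamental group is free), so $H^0(M;\mathbb{R})=\mathbb{R}$, $H^2(M;\mathbb{R})=0$, and hence $\chi(M) = 1 - b_1(M)$. On the other hand, capping off each of the $k$ boundary circles with a disk produces the closed orientable surface $\widehat M$ of genus $g$, and additivity of the Euler characteristic along the $k$ gluing circles (each with $\chi=0$) gives $\chi(\widehat M) = \chi(M) + k\cdot 1 - k\cdot 0$, i.e. $2-2g = \chi(M)+k$, so $\chi(M) = 2-2g-k$. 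Combining the two expressions, $1 - b_1(M) = 2-2g-k$, hence $b_1(M) = 2g+k-1$, and therefore $\dim\mathcal{H}^1_D(M) = 2g+k-1$.

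This argument has no genuine obstacle; it is an assembly of standard facts. The one place that requires care is citing the correct form of the de Rham theorem for manifolds with boundary, so that $\mathcal{H}^1_D(M)$ — the space of smooth harmonic fields satisfying the stated boundary conditions, not merely closed-and-coclosed forms modulo exact ones — is genuinely the relative cohomology $H^1(M,\partial M;\mathbb{R})$; and, secondarily, phrasing the Lefschetz-duality and universal-coefficient steps with the correct degrees and over $\mathbb{R}$.
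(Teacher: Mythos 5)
Your proof is correct, but it takes a genuinely different route from the paper's. You identify $\mathcal{H}^1_D(M)$ with $H^1(M,\partial M;\mathbb{R})$ via the Hodge--Morrey--Friedrichs theory in Schwarz \cite{Sch95}, then finish with Lefschetz duality and an Euler-characteristic count; everything reduces to standard citations and the computation $b_1(M)=2g+k-1$. The paper instead argues by hand: it splits $\mathcal{H}^1_D(M)=\mathcal{EH}^1_D(M)\oplus(\mathcal{EH}^1_D(M))^{\perp}$, identifies the exact part with harmonic functions that are constant on each boundary curve (dimension $k-1$, via solvability of the Dirichlet problem), and constructs an explicit isomorphism between the orthogonal complement and $H^1(\overline{M})$ for the capped-off closed surface $\overline{M}$ (dimension $2g$). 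Your argument is shorter and cleaner but leans on the full strength of the de Rham-type theorem for manifolds with boundary; the paper's is longer but essentially self-contained and exhibits the harmonic fields concretely, which is more informative geometrically. The one point you flag but do not carry out --- that a $1$-form with $\Delta\omega=0$ and $i^*\omega=i^*\delta\omega=0$ is automatically a harmonic \emph{field} ($d\omega=\delta\omega=0$), which is what Schwarz's isomorphism applies to --- does deserve a line: it follows from Green's formula, since the boundary terms in $\int_M\langle\Delta\omega,\omega\rangle = \|d\omega\|^2+\|\delta\omega\|^2$ vanish under the relative conditions. With that sentence added, your proof is complete.
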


\begin{proof} Let $\mathcal{EH}_D^1(M)$ denote the subspace of harmonic fields with Dirichlet boundary conditions which are exact. Then,
\begin{equation*}
 \mathcal{H}_D^1(M) = \mathcal{EH}_D^1(M)\oplus \left(\mathcal{EH}_D^1(M)\right)^{\perp},
 \end{equation*}
 and $\dim\mathcal{H}^1_D(M) = \dim\mathcal{EH}_D^1(M) +\dim\left(\mathcal{EH}^1_D(M)\right)^{\perp}$. We claim that $\dim\mathcal{EH}_D^1(M) = k-1$ and $\dim\left(\mathcal{EH}^1_D(M)\right)^{\perp}=2g$.
 
 For the first claim, if $\omega \in \mathcal{EH}_D^1(M)$, then there is a function $u\in C^{\infty}(M)$ for which $\omega= du$. Since $\omega$ is a harmonic field with Dirichlet boundary conditions, it follows that $u$ is a harmonic function and is constant on the boundary. If we write the boundary as a disjoint union of $k$ curves, $\partial M = \Gamma_1\cup\dots\cup \Gamma_k$, then we get that $u|_{\Gamma_i}=c_i$, for some constant $c_i$, $i=1,\ldots k$. Now, the Dirichlet problem
 \begin{equation*}
\begin{cases}
\Delta u = 0\\
u|_{\Gamma_i} = c_i,
\end{cases}
\end{equation*}
has a unique solution for each choice of $(c_1,\ldots, c_k)$ (see pg. 307 of \cite{Ta96}). Let 
\begin{equation*}
\mathcal{F} = \left\{u\in C^{\infty}(M)\ \bigg| \ \Delta u = 0, u|_{\Gamma_i}=c_i, i=1\ldots k, \sum_{i=1}^k c_i = 0\right\}.
\end{equation*}
It easy to see that the differential $d|_{\mathcal{F}}:\mathcal{F}\rightarrow \mathcal{E}\mathcal{H}_D^1(M)$ is linear and bijective,  and so $\dim\mathcal{E}\mathcal{H}_D^1(M) = \dim\mathcal{F} = k-1$.

Let $\overline{M}$ be a smooth Riemannian manifold obtained from $M$ by gluing a disk into each of its boundary curves $\Gamma_i$. To prove the second claim, we will construct an isomorphism between $\left(\mathcal{EH}^1_D(M)\right)^{\perp}$ and  $H^1(\overline{M})$. The result will then follow from the fact that there are $2g$ cohomology classes of closed forms on $\overline{M}$.

Let $\theta \in \Omega(\overline{M})$ be a closed form. We'll first show that there is a closed form $\tilde{\omega}\in \Omega(\overline{M})$ supported on $M$ which is cohomologous to $\theta$. To see this, let $\tilde{D_i}$, $i=1,\ldots, k$, be a disk slightly larger than and containing $D_i$, and let $\phi_i$ be a smooth cut-off function for which $\phi_i|_{D_i}\equiv 1$ and $\phi_i|_{\overline{M}\setminus\tilde{D_i}}\equiv 0$. Since $\tilde{D_i}$ is simply-connected, $\theta|_{\tilde{D_i}} = df_i$ for some smooth functions $f_i$. Let $\tilde{\omega} = \theta - \sum_{i=1}^k d(\phi_i f_i)$. Then $\tilde{\omega}|_{D_i}\equiv 0$ and $d\tilde{\omega} =0$, so $\tilde{\omega}$ is a closed form in $\Omega(\overline{M})$ with compact support. Since $\sum_{i=1}^kd(\phi_i f_i)$ is exact, it follows that $\theta$ and $\tilde{\omega}$ are cohomologous. For simplicity, we will suppress the restriction notation and write $\tilde{\omega}|_M$ by $\tilde{\omega}$. Now, we claim that any closed form $\tilde{\omega}\in\Omega(M)$ with compact support is cohomologous to a form $\omega_0 \in (\mathcal{EH}_D(M))^{\perp}$. To see this, let $u$ be a solution to the Poisson problem
\begin{equation*}
\begin{cases}
\Delta u = -\delta \tilde{\omega}\\
u|_{\Gamma_i} = 0
\end{cases},
\end{equation*}
and define $\omega = \tilde{\omega}+du$. Then, $\omega$ is harmonic, since $\Delta\omega = \Delta\tilde{\omega}+\Delta du = d\delta\tilde{\omega} + 0 -d\Delta u=0$. Moreover, $i^*\omega = i^*\tilde{\omega}+d(i^*u) = 0$, so $\omega$ satisfies the Dirichlet boundary condition. Now, $\omega = \omega_0 + dv$ for some $\omega_0\in (\mathcal{EH}^1_D(M))^{\perp}$ and $dv \in\mathcal{EH}^1_D(M)$. Hence, $\omega_0$ is cohomologous to $\omega$, and therefore $\tilde{\omega}$ and $\theta$. Note that $\omega_0$ is unique, \textit{i.e.}, for any closed form $\theta\in \Omega(\overline{M})$, there is a unique $\omega_0\in(\mathcal{EH}_D^1(M))^{\perp}$ for which $\omega_0\sim\theta$. If $\omega_0^1,\omega_0^2\in(\mathcal{EH}_D^1(M))^{\perp}$ are two such forms, then $\omega_0^1\sim\theta\sim\omega_0^2$. Hence, $\omega_0^1-\omega_0^2 = d\zeta$, for some smooth function $\zeta$. However, $\omega_0^1-\omega_0^2\in(\mathcal{EH}_D^1(M))^{\perp}\subset(\mathcal{E}\Omega(M))^{\perp}$ and $d\zeta\in\mathcal{E}\Omega(M)$, so it follows that $\omega_0^1=\omega_0^2$.

Let $\mathcal{L}:H^1(\overline{M})\rightarrow (\mathcal{EH}_D^1(M))^{\perp}$ be the map $[\theta]\mapsto \omega_0$ (as above). Note that it follows from the uniqueness of $\omega_0$ that $\mathcal{L}$ is well-defined and linear.

Now, $\mathcal{L}$ is also injective. If $\mathcal{L}([\theta_1]) = \mathcal{L}([\theta_2])$, then $\theta_1+du_1=\theta_2+du_2$, for some smooth functions $u_1, u_2$, which yields $\theta_1\sim\theta_2$.

Finally, $\mathcal{L}$ is surjective. Suppose $\omega_0\in\left(\mathcal{EH}_D^1(M)\right)^{\perp}$. Then, since $i^*\omega_0\equiv0$, 
\begin{equation*}
\int_{\partial M}\omega_0 = 0,
\end{equation*}
and it follows that $\omega_0$ is exact in a neighbourhood of each boundary curve, \textit{i.e.}, $\omega_0 = d\psi_i$ in a neighbourhood of $\Gamma_i$. Since we can extend each $\psi_i$ smoothly over $D_i$, we can extend $\omega_0$ to a closed form $\theta\in\overline{M}$. It follows from the well-definedness of $\mathcal{L}$ that $\mathcal{L}$ does not depend on the choice of $\tilde{D}_i$ or $\phi_i$, $i=1,\ldots k$. Hence, $\mathcal{L}([\theta]) = \omega_0$. 
\end{proof}

\end{appendix}

\end{document}